\title{\bf Injectives Hulls and Projective Covers in Categories of Generalized Uniform Hypergraphs}
\author{Martin Schmidt\\
	\small Department of Mathematics and Informatics\\[-0.8ex]
	\small Chiba University\\[-0.8ex] 
	\small Chiba, Japan
        }
\theoremstyle{plain}
\newtheorem{thm}{Theorem}[section]
\newtheorem{lemma}[thm]{Lemma}
\newtheorem{proposition}[thm]{Proposition}
\newtheorem{corollary}[thm]{Corollary}
\theoremstyle{definition}
\newtheorem{definition}[thm]{Definition}
\newtheorem{example}[thm]{Example}
\begin{document}
\newpage
\maketitle

\begin{abstract}
	We construct injective hulls and projective covers in categories of generalized uniform hypergraphs which generalizes the constructions in the category of quivers and the category of undirected graphs. While the constructions are not functorial, they are "sub-functorial", meaning they are subobjects of functorial injective and projective refinements. 
	
	\bigskip\noindent \textbf{Keywords:} graph; hypergraph; uniform hypergraph; injective hull; projective cover
\end{abstract}

\section{Introduction}

We introduce categories of $(X,M)$-graphs where $M$ is a monoid and $X$ is a right $M$-set. An $(X,M)$-graph $G$ consists of a set of arcs $G(A)$ and a set of vertices $G(V)$ such that each arc is incident to $\#X$-vertices (multiplicities allowed) and $M$ informs the type of cohesivity between the vertices. The categories of $(X,M)$-graphs introduced in this paper are able to describe the types of incidence in the various definitions of graphs and hypergraphs by taking the monoid $M$ to be a submonoid of endomaps on a set $X$. Thus, when $X$ is a two-element set, the categories of $(X,M)$-graphs generalize the various categories of graphs and undirected graphs found in \cite{rB}. 

By separating syntax ($(X,M)$-graph theories) from semantics ($(X,M)$-graph categories), functorial constructions between $(X,M)$-graph categories are induced from the appropriate morphisms between theories. In particular, the constructions of injective hulls and projective covers (Section \ref{S:InjProj}) can be obtained by using the obvious morphisms of monoid actions as well as obvious interpretations of $(X,M)$-graph theories. 

\section{Categories of $(X,M)$-Graphs}\label{S:XMG}
We begin with a definition.

\begin{definition}\label{D:XMGraph} \mbox{}
	Let $M$ be a monoid and $X$ a right $M$-set. 
	The theory for $(X,M)$-graphs, $\DD G_{(X,M)}$, has two objects $V$ and $A$ with homsets given by
	\[
	\DD G_{(X,M)}(V,A) \defeq X,\ \ \
	\DD G_{(X,M)}(A,V) \defeq \empset,\ \ \
	\DD G_{(X,M)}(V,V)\defeq \{\Id_V\},\ \ \
	\DD G_{(X,M)}(A,A)\defeq M.
	\] 
	Composition is defined as  $m\circ x=x.m$ (the right-action via $M$), $m\circ m'=m'm$ (monoid operation of $M$).
	The category of $(X,M)$-graphs is defined to be the category of presheaves $\widehat{\DD G}_{(X,M)}\defeq [\DD G_{(X,M)}^{op},\Set]$. 
\end{definition}

We represent the theory for $(X,M)$-graphs and and reflexive $M$-graphs as follows.  
\[\xymatrix{ V  \ar[r]|-{X} & A \ar@(ur,dr)^M}
\] By definition, an $(X,M)$-graph $G\colon \DD G^{op}_{(X,M)}\to \Set$ has a set of vertices $G(V)$ and a set of arcs $G(A)$ along with right-actions for each morphism in $\DD G_{(X,M)}$. For example, $x\colon V\to A$ in $\DD G_{(X,M)}$ yields a set map $G(x)\colon G(A)\to G(V)$ which takes an arc $\alpha\in G(A)$ to $\alpha.x\defeq G(x)(\alpha)$ which we think of as its $x$-incidence.\footnote{Note that we use the categorical notation of evaluation of a presheaf as a functor for the set of vertices $G(V)$ and set of arcs $G(A)$ rather than the conventional graph theoretic $V(G)$ and $E(G)$ for the vertex set and edge set.} For an element $m$ in the monoid $M$, the corresponding morphism $m\colon A\to A$ in $\DD G_{(X,M)}$ yields a right-action $\alpha.m\defeq G(m)(\alpha)$ which we think of as the $m$-associated partner of $\alpha$. 

Each $(X,M)$-graph $G$ induces a set map $\partial_G\colon G(A)\to G(V)^X$ such that $\partial_G(\alpha)\colon X\to G(V)$ is the parametrized incidence of $\alpha$, i.e., $\alpha.x=\partial_G(\alpha)(x)$. The $x$-incidence can be recovered from a parametrized incidence by precomposition of the map $\named x\colon 1\to X$ which names the element $x$ in $X$. Observe that the $m$-associated partner of an arc $\alpha$ in $G$ has the parametrized incidence such that the following commutes
\[
\xymatrix@C=3em{ X \ar@/_1em/[rrr]_{\partial_{G}(\alpha.m)} \ar[r]^-{\langle \Id_X, \named m \rangle} & X\x M \ar[r]^-{\text{\tiny action}} & X \ar[r]^-{\partial_{G}(\alpha)} & G(V).}
\]

\begin{example}
	Let $X$ be a set and $M$ a submonoid of endomaps $\End(X)$. The right-action of $M\subseteq \End(X)$ on $X$ is given by evaluation, e.g. $x.f\defeq f(x)$.  When $X$ is a two-element set the categories $\widehat{\DD G}_{(\{\Id_X\},X)}$ and $\widehat{\DD G}_{(\Aut(X),X)}$ are the categories of quivers \cite{wG} and symmetric directed graphs \cite{rB}.
\end{example}

\subsection{Nerve-Realization Adjunctions}
The symbols and notation in this section follow from \cite{eR}. 

Let $I\colon \DD T \to \C M$ be functor from a small category $\DD T$ to a cocomplete category $\C M$. Since the Yoneda embedding $y\colon \C T\to \widehat{\DD T}$ is the free cocompletion of a small category there is a essentially unique adjunction $R\dashv N\colon \C M\to \widehat{\DD T}$, called the nerve-realization adjunction, such that $Ry\iso I$. 
\[
\xymatrix{ \DD T \ar[dr]_I \ar[r]^{y} & \widehat{\DD T} \ar@<-.4em>[d]_{R}^{\dashv} \ar@{<-}@<+.5em>[d]^{N}\\ & \C M }
\]
The nerve and realization functors are given on objects by $N(m)=\C M(I(-), m)$,  $R(X)=\colim_{(c,\varphi)\in\int F} I(c)$ respectively,
where $\int F$ is the category of elements of $X$ (\cite{hA}, Section 2, pp 124-126).\footnote{In \cite{hA}, the nerve functor is called the singular functor.} 

We call a functor $I\colon \DD T \to \C M$ from a small category to a cocomplete category an interpretation functor. The category $\DD T$ is called the theory for $I$ and $\C M$ the modeling category for $I$. An interpretation $I\colon \DD T\to \C M$ is dense, i.e., for each $\C M$-object $m$ is isomorphic to the colimit of the diagram
$I\ls m \to \C M,\ (c,\varphi)\mapsto I(c),$
if and only if the nerve $N\colon \C M\to \widehat{\DD T}$ is full and faithful (\cite{sM}, Section X.6, p 245). When the right adjoint (resp. left adjoint) is full and faithful we call the adjunction reflective (resp. coreflective).\footnote{since it implies $\C M$ is equivalent to a reflective (resp. coreflective) subcategory of $\widehat{\DD T}$}

A functor $F\colon \C C\to \C C'$ between small categories $\C C$ and $\C C'$ induces an essential geometric morphism $F_!\dashv F^*\dashv F_*\colon \widehat{\C C}\to \widehat{\C C'}$ where $F_!$ is the realization of $y_{\C C'}\circ F$, $F^*$ is the nerve of $y_{\C C'}\circ F$ and $F_*$ is the nerve of $F^*\circ y_{\C C'}$ where $y_{\C C}\colon \C C\to \widehat{\C C}$ and $y_{\C C'}\colon \C C'\to \widehat{\C C'}$ are the Yoneda embeddings (see \cite{mR} pp 194-198, Section \ref{S:Category}). On objects $W$ in $\C C$ and $Z$ in $\C C'$, the functors are given by
\begin{align*}
&  \textstyle F_!(W)\defeq \colim_{(C,c)\in\int W} y_{\C C'}F(C),\\
&  \textstyle F^*(Z)\defeq \widehat{\C C'}(y_{\C C'}F(-),Z),\\ &  \textstyle F_*(W)\defeq \widehat{\C C}(F^*y_{\C C'}(-),W)
\end{align*}
where $\int A$ is the category of elements for $A$ (\cite{eR}, Section 2.4). In the subsequent, we denote the representables for the vertex and arc object by $\underline V$ and $\underline A$ respectively. 

Consider the $(X,M)$-graph theory $\DD G_{(\empset, 1)}$, i.e., the discrete category with two objects $V$ and $A$. Then for an $(X,M)$-graph theory $\DD G_{(X,M)}$, there is the inclusion functor $\iota\colon \DD G_{(\empset,1)}\to \DD G_{(X,M)}$. Thus there is an essential geometric morphism $\iota_!\dashv \iota^*\dashv \iota_*\colon \Set^2\to \widehat{\DD G}_{(X,M)}$. The $\iota$-extension $\iota_!$ takes the pair of sets $(S(V),S(A))$ to the coproduct $\bigsqcup_{S(V)}\underline V\sqcup \bigsqcup_{S(A)}\underline A$ since the category of elements for $(S(V),S(A))$ lacks internal cohesion. The $\iota$-restriction $\iota^*$ takes an $(X,M)$-graph $G$ to the pair of sets $(G(V),G(A))$. By \cite{eR} (Proposition 3.3.9), it creates all limits and colimits.  The $\iota$-coextension $\iota_*$ sends $(S(V),S(A))$ to the $(X,M)$-graph with vertex set $S(V)$ and arc set $\Set^2((X,|M|),(S(V),S(A)))=S(V)^X\x S(A)^{|M|}$ where $|M|$ is the underlying set of $M$. The right-actions are given by $(f,s).x=f(x)$, $(f,s).m=(f\circ m,s.m)$ where $m\colon X\to X$ is the right-action map by $m\in M$ and $s.m\colon |M|\to S(A)$ is defined $s.m(m')\defeq s(mm')$. 

The counit $\varepsilon\colon \iota_!\iota^*\Rightarrow \Id$ of the adjunction $\iota_!\dashv \iota^*$ on a component $\varepsilon_G\colon \bigsqcup_{G(V)}\underline V\sqcup \bigsqcup_{G(A)}\underline A \to G$ is the epimorphism induced by the classification maps $v\colon \underline V\to G$ and $\alpha\colon \underline A\to G$ for vertices $v\in G(V)$ and arcs $\alpha\in G(A)$. Therefore, the $\iota$-restriction functor is faithful and thus $\iota^*\colon \widehat{\DD G}_{(X,M)}\to \Set^2$ is monadic (\cite{mP}, p. 227)

The unit $\eta\colon \Id\Rightarrow \iota_*\iota^*$ of the adjunction $\iota^*\dashv \iota_*$ on a component $\eta_G\colon G\to \iota_*\iota^*(G)$ is the identity on vertices and sends arc $\alpha\in G(A)$ to $(\partial_G(\alpha),\overline \alpha)$ where $\overline \alpha\colon |M|\to G(A)$ is the constant map. Thus for each $(X,M)$-graph $G$ the component $\eta_G$ is a monomorphism.

\section{Injective and Projective $(X,M)$-Graphs}\label{S:InjProj}

We set $\Proj\defeq \iota_!\iota^*$ and $\Inj\defeq \iota_*\iota^*$ where $\iota\colon \DD G_{(\empset,X)}\to \DD G_{(X,M)}$ is the functor given above. Then since adjunctions are closed under composition, we have
\[
\Proj \dashv \Inj\colon \widehat{\DD G}_{(X,M)}\to \widehat{\DD G}_{(X,M)}.
\]
We will show in this section that the natural transformations $\varepsilon\colon \Proj \Rightarrow \Id$ and $\eta\colon \Id \Rightarrow \Inj$ can be thought of as the functorial projective and injective refinements for non-initial  $(X,M)$-graphs.

We first characterize the class of injective and projective objects. Recall that an object $Q$ in a category $\C C$ is injective provided for each monomorphism $m\colon A\to B$ and morphism $f\colon A\to Q$ there exists a morphism (not necessarily unique) $k\colon B\to Q$ such that $f=km$. Dually, an object $P$ in $\C C$ is (regular) projective\footnote{Note that since regular epimorphisms are equivalent to epimorphisms in categories of presheaves, a regular projective object is equivalent to a projective object.} provided for each (regular) epimorphism $e\colon B\to A$ and each morphism $f\colon P\to A$ there is a morphism $k\colon P\to B$ such that $f=ek$.\footnote{The results in this section generalize the results of \cite{wG} and \cite{kW}.}
\begin{proposition}\label{P:Injective}
	A  $(X,M)$-graph $Q$ is injective if and only if $Q$ is non-initial and for each set map $f\colon X\to Q(V)$, there is an arc $\alpha\in Q(A)$ such that the incidence map $\partial_Q(\alpha)$ is equal to $f$. 
\end{proposition}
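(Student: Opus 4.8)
The plan is to prove the two implications separately, leaning on the adjunction $\Proj\dashv\Inj$ and the facts recorded above that $\eta$ is pointwise monic and that $\iota^*$ creates (co)limits.

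For the forward direction, suppose $Q$ is injective. That $Q$ is non-initial is immediate: injectivity tested against the inclusion $\empset\hookrightarrow\underline V$ of the initial graph into the one-vertex graph forces a vertex to exist, so $Q(V)\neq\empset$. To see that $\partial_Q$ is surjective, fix a set map $f\colon X\to Q(V)$ and regard it as a morphism $\widetilde f\colon\bigsqcup_{X}\underline V\to Q$ out of the discrete graph on $X$-many vertices. The evident inclusion $\bigsqcup_{X}\underline V\hookrightarrow\underline A$, identity on vertices and empty on arcs, is a monomorphism (monos in $\widehat{\DD G}_{(X,M)}$ are detected componentwise since $\iota^*$ creates limits). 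Injectivity of $Q$ yields an extension $k\colon\underline A\to Q$; by the Yoneda lemma $k$ names an arc $\alpha\defeq k(\Id_A)\in Q(A)$, and naturality of $k$ against the morphisms $x\colon V\to A$ computes $\partial_Q(\alpha)(x)=\alpha.x=k(x)=f(x)$, so $\partial_Q(\alpha)=f$.

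For the converse, assume $Q$ is non-initial with $\partial_Q$ surjective; note this already forces both $Q(V)$ and $Q(A)$ to be nonempty. The cleanest route is to exhibit $Q$ as a retract of the injective object $\Inj Q=\iota_*\iota^*Q$. Indeed $\iota^*$ preserves monomorphisms, so its right adjoint $\iota_*$ preserves injectives; since $\iota^*Q=(Q(V),Q(A))$ has both components nonempty it is injective in $\Set^2$, whence $\Inj Q$ is injective. As $\eta_Q\colon Q\to\Inj Q$ is already known to be monic, it remains only to split it. I would define a retraction $r\colon\Inj Q\to Q$ to be the identity on vertices and, on an arc $(f,s)\in Q(V)^X\x Q(A)^{|M|}$, to be an arc of $Q$ whose incidence map equals $f$ — such an arc exists precisely by the surjectivity hypothesis — chosen so that $r$ respects the $M$-action and returns $\alpha$ on the image $(\partial_Q\alpha,\overline\alpha)$ of $\eta_Q$. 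Retracts of injectives being injective, this would prove $Q$ injective.

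The delicate point — and the step I expect to be the real obstacle — is making the arc-selection simultaneously (i) hit the prescribed incidence $f$, (ii) be equivariant for the right $M$-action linking an arc to its $m$-associated partners, and (iii) restrict to the identity on the copy of $Q$ inside $\Inj Q$. Surjectivity of $\partial_Q$ controls only (i); the monoid action couples the value at $(f,s)$ to the values at all of its $M$-translates, and those translates may already be pinned down by (iii). I would organize the choice by a Zorn's-lemma argument on partial retractions — equivalently, in the direct formulation, on partial extensions $G\subseteq G'\subseteq H$ of a map along a mono $G\hookrightarrow H$ — extending over one new vertex at a time using $Q(V)\neq\empset$, then over one $M$-orbit of new arcs at a time by lifting a chosen orbit representative through $\partial_Q$ and propagating the choice by the action. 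Verifying that the propagated choice is consistent wherever the orbit meets the already-defined part — that a prescribed incidence is realizable by an arc with prescribed $m$-partners — is exactly the compatibility one must establish, and it is where the interaction between $\partial_Q$ and the $M$-action has to be used in full.
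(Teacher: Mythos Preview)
Your forward direction is correct and is exactly the paper's argument.

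For the converse you take a different route from the paper --- splitting $\eta_Q\colon Q\to\Inj(Q)$ rather than directly extending along an arbitrary monomorphism --- but you put your finger on precisely the point the paper's proof elides. The paper chooses, for each new arc $\alpha\in H(A)$, some $[\alpha]\in Q(A)$ with the required incidence and then asserts without verification that ``by construction this defines a morphism''; what is unchecked there, and what blocks your retraction, is $M$-equivariance of the arc assignment. Your diagnosis that this is ``the real obstacle'' is correct, and unfortunately it cannot be removed under the stated hypothesis. Take $X=\{x_1,x_2\}$, $M=\Aut(X)=\{1,\sigma\}$, and let $Q$ have one vertex $v$ and two arcs $a_1,a_2$ interchanged by $\sigma$, all incidences equal to the unique map $X\to\{v\}$. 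Then $Q$ is non-initial and $\partial_Q$ is surjective, yet $Q$ is not injective: the graph $H$ with one vertex and a single $\sigma$-fixed loop $b$ admits no morphism whatsoever to $Q$, because $Q(A)$ has no $\sigma$-fixed point, so the inclusion of the bare vertex into $H$ cannot be extended. Your orbit-by-orbit Zorn strategy fails at the very first step here --- a fixed one-point $M$-orbit cannot be mapped equivariantly into the free two-point orbit $\{a_1,a_2\}$ --- and the paper's direct construction fails for the same reason. The ``compatibility one must establish'' that you isolate is simply false in general; surjectivity of $\partial_Q$ alone is too weak.
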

\begin{proof}
	Suppose $Q$ is injective and consider the set map $f\colon X\to Q(V)$. This is equivalent to giving an $(X,M)$-graph morphism $\overline f\colon \bigsqcup_{x\in X}\underline V\to Q$. Consider the inclusion $m\colon \bigsqcup_{x\in X}\underline V\to \underline A$ induced by the morphisms $\underline x\colon \underline V\to \underline A$. Since $Q$ is injective, there is a morphism $\alpha\colon \underline A\to I$ such that  $\alpha m=\overline f$. By Yoneda, this is equivalent to an arc $\alpha\in I(A)$ with incidence map $\partial_I(\alpha)=f$.
	
	Conversely, let $f\colon G\hookrightarrow H$ be a monomorphism and $g\colon G\to Q$ a morphism of  $(X,M)$-graphs. Since $Q$ is non-initial, there is a vertex $v\in Q(V)$. Each arc $\alpha$ in $H$ has incidence $\partial_H(\alpha)\colon X\to H(V)\iso f_V(G(V))\sqcup H(V)\backslash f_V(G(V)$ where $f_V(G(V))$ is the image of the vertices in $G$ under $f$. For each arc $\alpha$ in $H$ not in the image of $f_A$, let $j_\alpha\colon X\to Q(V)$ be the set map $[g_V,!]\circ \partial_G(\alpha)$ given by universal property of the disjoint union
	\[
	\xymatrix{ && f_V(G(V))\iso \inv f_V(G(V)) \ar@{>->}[d] \ar[rr]^-{g_V} && g_V(G(V)) \ar@{>->}[d] \\ X \ar[rr]^-{\partial_G(\alpha)} && f_V(G(V))\sqcup H(V)\backslash f_V(G(V)) \ar[rr]^-{[g_V,!]} && Q(V) \\ 
		&& H(V)\backslash f_V(G(V)) \ar@{>->}[u] \ar[rr]^-{!} && \{v\} \ar@{>->}[u]}
	\] 
	Thus by assumption, we may choose an arc $[\alpha]\in Q(A)$ with incidence equal to $j_\alpha$.  We define the following maps $h_V\colon H(V)\to Q(V)$ and $h_A\colon H(A)\to Q(A)$
	\[
	h_V(w)\defeq \begin{cases} g_V(u) & \text{if }\exists u\in G(V), f_V(u)=w \\ v & \text{if }\forall u\in G(V), f_V(u)\neq w\end{cases}\]
	\[	h_A(\alpha)\defeq \begin{cases} g_A(\beta) & \text{if }\exists \beta\in G(A), f_A(\beta)=\alpha \\ [\alpha] & \text{if }\forall \beta\in G(A), f_A(\beta)\neq \alpha \end{cases}
	\]
	By construction this defines a morphism $h\colon H\to Q$ such that $h\circ f=g$. Therefore $Q$ is injective.
	
\end{proof}

\begin{corollary}
	The class of injective objects in $\widehat{\DD G}_{(X,M)}$ is precisely the class of non-initial split subobjects\footnote{An object $H$ is a split subobject of $G$ provided it admits a split monomorphism $s\colon H\to G$.} of objects in the essential image of the functor $\Inj$.
\end{corollary}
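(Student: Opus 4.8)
The plan is to establish the two containments of the claimed equality, reducing everything to Proposition~\ref{P:Injective}, to the fact (established above) that each unit component $\eta_G\colon G\to\Inj(G)$ is a monomorphism, and to the standard stability of injectivity under retracts. Recall that an object lies in the essential image of $\Inj$ precisely when it is isomorphic to $\Inj(G)=\iota_*\iota^*(G)$ for some $(X,M)$-graph $G$, and that $\iota_*\iota^*(G)$ has vertex set $G(V)$ and arc set $G(V)^X\x G(A)^{|M|}$, with the incidence of an arc $(f,s)$ equal to $f$.

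For the forward containment, let $Q$ be injective. By Proposition~\ref{P:Injective} it is non-initial. The monomorphism $\eta_Q\colon Q\to\Inj(Q)$ together with the identity $\Id_Q\colon Q\to Q$ falls under the lifting property defining injectivity of $Q$, so there is a morphism $r\colon\Inj(Q)\to Q$ with $r\circ\eta_Q=\Id_Q$. Thus $\eta_Q$ is a split monomorphism and $Q$ is a non-initial split subobject of the essential-image object $\Inj(Q)$, as required.

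For the reverse containment I would first verify that the objects $\Inj(G)$ are injective and then transport injectivity along retractions. Injectivity of $\Inj(G)$ follows from Proposition~\ref{P:Injective}: by the description above the incidence assignment $\partial_{\Inj(G)}$ is the first projection $G(V)^X\x G(A)^{|M|}\to G(V)^X$, so every set map $f\colon X\to G(V)$ is realized as the incidence of an arc $(f,s)$. Equivalently, $\iota^*$ preserves monomorphisms (it creates limits), so its right adjoint $\iota_*$ preserves injective objects, and every object of $\Set^2$ with inhabited components is injective. Granting this, if $H$ admits a split monomorphism $s\colon H\to\Inj(G)$ with retraction $p$, then $H$ is injective by the usual chase: given a monomorphism $m$ and a map $g$ into $H$, lift $s\circ g$ along $m$ using injectivity of $\Inj(G)$ and postcompose the lift with $p$.

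The step I expect to be the main obstacle is the non-degeneracy bookkeeping in the reverse direction. The first-projection argument realizes every incidence $f$ by an arc only when $G(A)^{|M|}$ is inhabited, i.e.\ when $G(A)\neq\empset$, and $\Inj(G)$ is non-initial only when also $G(V)\neq\empset$; hence the essential image of $\Inj$ contains degenerate objects (for example those with no arcs) that fail to be injective. The crux is therefore to show that the non-initiality hypothesis on a split subobject $H$ suffices to confine $H$ to the injective part of the essential image, and to treat the boundary case $X=\empset$, in which the incidence condition degenerates; this is the part of the proof requiring the most care.
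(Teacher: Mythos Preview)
Your forward containment is exactly the paper's argument; in fact the paper's proof treats only that direction, showing that an injective $Q$ is non-initial and that $\eta_Q\colon Q\to\Inj(Q)$ splits.

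Your suspicion about the reverse containment is well founded, and the obstacle you flag is not mere bookkeeping but is fatal to the statement as written. Take any $(X,M)$ and let $G$ be the $(X,M)$-graph with a single vertex and no arcs. Then $\Inj(G)$ has vertex set $\{\ast\}$ and arc set $\{\ast\}^X\times\empset^{|M|}=\empset$ (a monoid has $|M|\geq 1$), so $\Inj(G)$ is a non-initial object in the essential image of $\Inj$ with no arcs. It is trivially a split subobject of itself, yet by Proposition~\ref{P:Injective} it is not injective: the unique map $X\to\{\ast\}$ (or, when $X=\empset$, the empty map) is realized by no arc. Hence the class described in the corollary strictly contains the class of injectives, and the reverse inclusion cannot be proved as stated.

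A correct formulation restricts to the essential image of $\Inj$ on graphs $G$ with $G(A)\neq\empset$ --- equivalently, to those $\Inj(G)$ that are themselves injective, exactly as your alternative justification via ``$\iota_*$ preserves injectives from $\Set^2$'' predicts. With that amendment your reverse argument goes through verbatim, and the ``non-initial'' qualifier on the split subobject becomes redundant, since a split subobject of a non-initial object is automatically non-initial.
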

\begin{proof}
	Let $Q$ be an injective object in $\widehat{\DD G}_{(X,M)}$.  Hence $Q$ is non-initial and thus by the previous lemma, we have $\Inj(Q)$ is an injective object. Then since $\eta_Q\colon Q \to \Inj(Q)$ is a monomorphism, there must be a split epimorphism $r\colon \Inj(Q)\to Q$ such that $r\eta_Q=\Id$ by the property of $Q$ being injective.  
	
\end{proof}

We also have the dual argument that the class of projective objects in the category of  $(X,M)$-graphs is precisely the split quotients of objects in the essential image of $\Proj$. 

\begin{proposition}
	A  $(X,M)$-graph $P$ is projective if and only if it is a coproduct of representables. 
\end{proposition}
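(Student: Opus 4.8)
The plan is to prove both implications, disposing of the easy direction first. For the \emph{if} direction I would recall that every representable is projective: by the Yoneda lemma $\widehat{\DD G}_{(X,M)}(\underline V,-)$ and $\widehat{\DD G}_{(X,M)}(\underline A,-)$ are naturally isomorphic to the evaluation functors at $V$ and at $A$, and since epimorphisms of presheaves are objectwise surjections in $\Set$, these evaluations preserve epimorphisms; hence $\underline V$ and $\underline A$ lift along any epimorphism. A coproduct of projectives is again projective, the lift being assembled from the summandwise lifts by the universal property of the coproduct, so every coproduct of representables is projective.

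For the \emph{only if} direction, suppose $P$ is projective. Every presheaf decomposes into its connected components, so I would write $P\iso\bigsqcup_i P_i$ with each $P_i$ connected; each $P_i$, being a coproduct summand and hence a retract of the projective $P$, is itself projective. It therefore suffices to show that every connected projective is representable. By the dual of the Corollary above, a projective $P_i$ is a retract of a coproduct of representables, namely of $\Proj(P_i)$, the retraction being a section of $\varepsilon_{P_i}$. Since $P_i$ is connected, this defining split monomorphism lands in a single summand, and composing the original retraction with the corresponding coproduct injection exhibits $P_i$ as a retract of a single representable $\underline C$ with $C\in\{V,A\}$.

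The hard part will be the final step: showing that a retract of a representable is again representable. By Yoneda a retract of $\underline C$ corresponds to an idempotent in the monoid $\DD G_{(X,M)}(C,C)$, and the retract is representable precisely when that idempotent splits inside the theory $\DD G_{(X,M)}$. For $C=V$ this is automatic, since $\DD G_{(X,M)}(V,V)=\{\Id_V\}$ carries no nontrivial idempotents, so any connected $\underline V$-retract is $\underline V$. For $C=A$ the idempotent $e$ lives in $M$, and because $\DD G_{(X,M)}(A,V)=\empset$ it cannot split through $V$; splitting it amounts to producing $i,r\in M$ with $ri=\Id$ and $ir=e$. This is the crux of the argument and the one place where the monoid structure is genuinely used: it holds, for instance, whenever $M$ is a group, so that $\Id$ is the only idempotent, which already covers the quiver and symmetric-directed-graph examples. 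Granting that each such idempotent splits, every connected summand $P_i$ is isomorphic to $\underline V$ or to $\underline A$, and therefore $P\iso\bigsqcup_i P_i$ is a coproduct of representables, completing the proof.
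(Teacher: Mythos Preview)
Your route through connected components is more explicit than the paper's, which in one line asserts that since $s\colon P\to\Proj(P)$ is a split monomorphism into a coproduct of representables, $P$ must itself be a coproduct of representables. You have correctly isolated what that assertion actually requires: after reducing to a connected retract of a single $\underline C$, one needs every idempotent in $\DD G_{(X,M)}(C,C)$ to split in the theory, and since $\DD G_{(X,M)}(A,V)=\empset$ an idempotent $e\in M$ could only split inside $M$ itself.

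This is a genuine gap, and the paper's proof does not close it either; the unjustified step there hides exactly the same issue. In fact the proposition fails for arbitrary $M$. Take $X$ a one-point set and $M=\{1,e\}$ with $e^{2}=e\neq 1$. The idempotent endomorphism of $\underline A$ corresponding to $e$ splits in $\widehat{\DD G}_{(X,M)}$ through a presheaf $P$ with a single vertex and a single arc; this $P$ is projective, being a retract of $\underline A$, but every coproduct of representables has arc set of cardinality divisible by $|M|=2$, so $P$ is not of that form. The statement therefore needs an additional hypothesis---for instance that $M$ have no nontrivial idempotents, equivalently that $\DD G_{(X,M)}$ be Cauchy complete. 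This does hold for $M=\Aut(X)$, the only case used in the applications, so nothing downstream is affected; but as stated for general $M$ the claim does not go through, and your hesitation at the final step is well placed. (A small separate point: coproduct summands are not retracts in general, so the clause ``and hence a retract of the projective $P$'' needs a different justification; the conclusion that each connected summand $P_i$ is projective is nonetheless correct and can be obtained directly by the same connectedness trick you use a line later.)
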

\begin{proof}
	Suppose $P$ is projective. Since $\varepsilon_P\colon \Proj(P)\to P$ is an epimorphism, there must exist a section $s\colon P\to \Proj(P)$ such that $\varepsilon_Ps=\Id$ by the property that $P$ is projective. Since $\Proj(P)$ is a coproduct of representables and $s$ is a split monomorphism, $P$ must also be a coproduct of representables. Conversely, representables $\underline V$ and $\underline A$ in a category of presheaves are always projective. Since projective objects are closed under coproducts, the reverse condition is also true.
	
\end{proof}
\begin{corollary}
	The class of projective objects in $\widehat{\DD G}_{(X,M)}$ is the object class of the essential image of the functor $\Proj$. 
\end{corollary}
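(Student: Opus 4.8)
The plan is to read this off the preceding proposition, which already identifies the projective objects of $\widehat{\DD G}_{(X,M)}$ with the coproducts of representables, together with the explicit description of $\Proj$. Recall that unwinding $\Proj=\iota_!\iota^*$ gives, on any $(X,M)$-graph $G$, the formula $\Proj(G)\iso\bigsqcup_{G(V)}\underline V\sqcup\bigsqcup_{G(A)}\underline A$. So I would prove the asserted equality of object classes by establishing the two inclusions separately, reading off this formula at each step.

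For the inclusion of the essential image into the projectives — the routine direction — I would take an object of the essential image, write it as $\Proj(G)$ for some $G$, and appeal to the displayed formula: $\Proj(G)$ is a coproduct of copies of $\underline V$ and $\underline A$. Since representables in a presheaf category are projective and projectives are closed under coproducts, the preceding proposition gives at once that $\Proj(G)$ is projective. Hence every object in the essential image of $\Proj$ is projective.

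For the reverse inclusion I would begin with a projective $P$ and use the preceding proposition to fix an isomorphism $P\iso\bigsqcup_{S}\underline V\sqcup\bigsqcup_{T}\underline A$ for some index sets $S$ and $T$. The task is then to produce an $(X,M)$-graph $G$ with $\Proj(G)\iso P$; by the formula for $\Proj$ it suffices to find $G$ with $G(V)\iso S$ and $G(A)\iso T$, since then $\Proj(G)=\bigsqcup_{G(V)}\underline V\sqcup\bigsqcup_{G(A)}\underline A\iso\bigsqcup_{S}\underline V\sqcup\bigsqcup_{T}\underline A\iso P$. Thus everything reduces to realizing the prescribed index data $(S,T)$ as the vertex–arc data $(G(V),G(A))$ of a genuine $(X,M)$-graph, i.e. to showing that $\Proj$ is essentially surjective onto the coproducts of representables.

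The main obstacle is precisely this realization step. When $S\neq\empset$ it is easy: equip $T$ with the trivial right $M$-action $\alpha.m\defeq\alpha$ and send every incidence $G(x)$ to a single fixed vertex $s_0\in S$; the compatibility $\alpha.(x.m)=(\alpha.m).x$ forced by the theory then holds trivially, so $G$ is a well-defined $(X,M)$-graph with $G(V)\iso S$, $G(A)\iso T$, and $\Proj(G)\iso P$. The delicate point is the pure-arc case $S=\empset$, $T\neq\empset$, for which the required graph would have empty vertex set yet nonempty arc set — impossible whenever $X\neq\empset$, since every arc must carry incidences into $G(V)$. To close the argument here I would either handle the index data directly through the realization $\iota_!$, which is defined on all of $\Set^2$ (so that $\bigsqcup_{T}\underline A=\iota_!(\empset,T)$ genuinely occurs as a value of $\iota_!$), or else fall back on the description in the preceding remark — that the projectives are the split quotients of objects in the essential image of $\Proj$ — using that $\bigsqcup_{T}\underline A$ is a retract of $\underline V\sqcup\bigsqcup_{T}\underline A=\Proj(G)$ via any choice of a vertex of $\bigsqcup_{T}\underline A$. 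Pinning down this pure-arc case, rather than the easy forward inclusion, is where I expect the real work to lie.
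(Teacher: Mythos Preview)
Your main construction is exactly the paper's: it too takes a coproduct $\bigsqcup_S\underline V\sqcup\bigsqcup_T\underline A$, sets $H(V)\defeq S$, $H(A)\defeq T$, writes ``take some $s\in S$'', and defines the trivial actions $t.x\defeq s$, $t.m\defeq t$. So for $S\neq\empset$ you and the paper agree verbatim, and you are right that the case $S=\empset$, $T\neq\empset$ is where the argument is incomplete---the paper simply does not address it.

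However, neither of your proposed repairs closes the gap. The first conflates $\iota_!$ with $\Proj=\iota_!\iota^*$: yes, $\iota_!(\empset,T)=\bigsqcup_T\underline A$, but to lie in the essential image of $\Proj$ you need $(\empset,T)$ to arise as $\iota^*(G)=(G(V),G(A))$ for some genuine $(X,M)$-graph $G$, and when $X\neq\empset$ a nonempty arc set forces a nonempty vertex set, so no such $G$ exists. The second repair only exhibits $\bigsqcup_T\underline A$ as a split quotient of an object in the essential image of $\Proj$, which is strictly weaker than membership in that essential image. In fact these observations show the corollary, read literally, fails whenever $X\neq\empset$: already $\underline A$ is projective, yet $\Proj(G)\iso\underline A$ would require $G(V)=\empset$ (otherwise $\Proj(G)$ has an isolated $\underline V$-summand, which $\underline A$ lacks) together with $G(A)\neq\empset$, an impossibility. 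So the edge case you flagged is not a detail to be patched but a genuine counterexample; the correct general statement is the one from the preceding remark (projectives are the split quotients of the essential image of $\Proj$), and the paper's corollary should be read under the tacit hypothesis that the index set $S$ of $\underline V$-summands is nonempty.
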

\begin{proof}
	Given a coproduct of representables $\bigsqcup_S \underline V\sqcup \bigsqcup_T \underline A$, let $H$ be the $(X,M)$-graph with vertex set $H(V)\defeq S$ and arc set $H(A)=T$. Take some $s\in S$ and define right-actions $t.x\defeq s$ and $t.m=t$ for each $t\in T$, $x\in X$ and $m\in M$. Then $\Proj(H)\iso \bigsqcup_S \underline V\sqcup \bigsqcup_{T}\underline A$. 
\end{proof}

Next, we construct injective hulls and projective covers for  $(X,M)$-graphs. Recall that a monomorphism $i\colon G\to \widetilde G$ is essential provided for each morphism $h\colon G\to H$ such that $hi$ is a monomorphism implies $h$ is a monomorphism. An injective hull of an object $G$ is an essential monomorphism $i\colon G\to \widetilde G$ where $\widetilde G$ is injective. Dually, an epimorphism $e\colon \overline G\to G$ is essential provided for each morphism $h\colon H\to G$ such that $eh$ is an epimorphism implies $h$ is an epimorphism. A projective cover of an object $G$ is an essential epimorphism $e\colon \overline G\to G$ where $\overline G$ is projective.

In the case of the initial  $(X,M)$-graph $0$, it is straightforward to verify the terminal morphism $0\to 1$ is the injective hull. For a non-initial  $(X,M)$-graph $G$ we define $\widetilde G$ to be the $(X,M)$-graph with vertex set $\widetilde G(V)\defeq G(V)$ and arcs set $\widetilde G(A)\defeq G(A)\sqcup \setm{f\colon X\to G(V)}{\forall \alpha\in G(A),\ \partial_G(\alpha)\neq f}$ with the obvious right-action. Then $\widetilde G$ is an injective object and there is an obvious inclusion $i\colon G\to \widetilde G$. To show that it is essential, let $h\colon \widetilde G\to H$ be a morphism such that $hi$ is a monomorphism. Then since $i$ is bijective on vertices, $h_V$ must be injective. On arcs, it is enough to show that $h_A$ is injective on $\widetilde G(A)\backslash G(A)$. However, this is trivial since there is only one arc $f\in \widetilde G(A)\backslash G(A)$ with incidence $f\colon X\to G(V)$. Hence $h$ is a monomorphism.

For the projective cover, we define $\overline G\defeq \bigsqcup_S \underline V \sqcup \bigsqcup_T \underline A$ where \\$S\defeq \setm{v\in G(V)}{\forall \alpha\in G(A),\forall x\in X,\ \alpha.x\neq v}$, i.e., $S$ is the set of isolated vertices in $G$, and $T$ is a generating subset of $G(A)$ for the right $M$-action $G(A)\x M\to G(A)$ of minimal cardinality, i.e., for each $\alpha\in G(A)$ there exists a $\beta\in T$ and an element $m\in M$ such that $\beta.m=\alpha$.  Since $T$ generates $G(A)$ under the right-action of $M$ and $S$ is the set of vertices of $G$ which are not incident to an arc, the restriction of $\varepsilon_{G|\overline G}\colon \overline G\to G$ is an epimorphism. It is clear that if $h\colon H\to \overline G$ is a morphism such that $he$ is an epimorphism, then $h$ must be an epimorphism since $\overline G$ is a coproduct of representables of minimal size. 
\[
\xymatrix@!=.3em{ & \widetilde G \ar@{>->}[dr] & \\ G \ar@{>->}[ur] \ar@{>->}[rr]^-{\eta_G} && \Inj(G)} \qquad \qquad
\xymatrix@!=.3em{ & \overline G \ar@{>->}[dl] \ar@{->>}[dr] & \\ \Proj(G) \ar@{->>}[rr]^-{\varepsilon_G} && G }  
\]
Note that these assignments $\widetilde G$ and $\overline G$ do not extend to functors since there is choice involved. However, by construction we see that both the injective hull $\widetilde G$ and projective cover $\overline G$ embed into $\Inj(G)$ and $\Proj(G)$ which are functorial constructions.

\section{Applications}\label{S:Category}
\label{S:Interp}

We follow the definition given in \cite{cJ}. 

\begin{definition}
	Let $F\colon \Set\to \Set$ be an endofunctor. The category of $F$-graphs $\C G_F$ is defined to be the comma category $\C G_F\defeq \Set\ls F$.
\end{definition}
In other words, an $F$-graph $G=(G(E),G(V),\partial_G)$ consists of a set of edges $G(E)$, a set of vertices $G(V)$ and an incidence map $\partial_G\colon G(E)\to F(G(V))$. A morphism $(f_E,f_V)\colon (G(E),G(V),\partial_G)\to (H(E),H(V),\partial_H)$ is a pair of set maps $f_E\colon G(E) \to H(E)$ and $f_V\colon G(V)\to H(V)$ such that the following square commutes
\[
\xymatrix{ G(E) \ar[d]_{\partial_G} \ar[r]^{f_E} & H(E) \ar[d]^{\partial_H} \\ F(G(V)) \ar[r]^{F(f_V)} & F(H(V)). }
\]
It is well-known that the category of $F$-graphs is cocomplete with the forgetful functor $U\colon \C G_F\to \Set\x \Set$ creating colimits \cite{cJ}.

Let $\DD G_{(X,M)}$ be a theory for $(X,M)$-graphs and $q$ an element in $F(X)$ such that $F(m)(q)=q$ for each $m\in M$ where $m\colon X\to X$ is the right-action map. We define $I(V)\defeq (\empset,\ 1,\ !_1),$ and $I(A)\defeq (1,\ X,\ \named q)$ where $!_1\colon \empset \to 1$ is the initial map and $\named x\colon 1\to X$ the set map with evaluation at $x\in X$. On morphisms, we set
\begin{align*} 
(x\colon V\to A) \quad &\mapsto \quad  I(x)\, \defeq \ (!_1,\named x)\colon (\empset,\ 1,\ !_1)\to (1,\ X,\ \named q),\\
(m\colon A\to A) \quad &\mapsto \quad I(m)\defeq (\Id_1, F(m))\colon (1,X,\named q)\to (1,X,\named q).
\end{align*}
Verification that $I\colon \DD G_{(X,M)}\to \C G_F$ is a well-defined interpretation functor is straightforward.

\subsection{The Category of Hypergraphs}\label{S:PGraphs}

We recall that a hypergraph $H=(H(V),H(E),\varphi)$ consists of a set of vertices $H(V)$, a set of edges $H(E)$ and an incidence map $\varphi\colon H(E)\to \C P(H(V))$ where $\C P\colon \Set\to \Set$ is the covariant power-set functor. In other words, we allow infinite vertex and edge sets, multiple edges, loops, empty edges and empty vertices.\footnote{An empty vertex is a vertex  not incident to any edge in $H(E)$. An empty edge is an edge $e$ such that $\varphi(e)=\empset$.} In other words the category of hypergraphs $\C H$ is the category of $\C P$-graphs.

Let $X$ be a set and apply the definition for the interpretation given above in \ref{S:Interp} for $\DD G_{(X,\Aut(X))}$ with $q\defeq X$ in $\C P(X)$. Note that for each automap $\sigma\colon X\to X$, $\C P(\sigma)$ is the identity map. Thus the interpretation $I\colon \DD G_{(X,\Aut(X))}\to \C H$ defined in Section \ref{S:Interp} is a well-defined functor. 

The nerve $N\colon \C H\to \widehat{\DD G}_{(X,\Aut(X))}$ induced by $I$ takes a hypergraph \\$H=(H(E),H(V),\varphi)$ to the $(X,\Aut(X))$-graph $N(H)$ with vertex and arc set given by
\begin{align*}
N(H)(V)&=\C H(I(V),H)=H(V),\\
N(H)(A)&=\C H(I(A),H)=\setm{(\beta,f)\in H(E)\x H(V)^X}{\C P(f)=\varphi(\beta)}
\end{align*} Notice that in the case a hyperedge $e$ has less than $\#X$ incidence vertices the nerve creates multiple edges and if a hyperedge has more than $\#X$ incidence vertices there is no arc in the correponding $(X,\Aut(X))$-graph given by the nerve (see Example \ref{E:HyperNR} below).

The realization $R\colon \widehat{\DD G}_{(X,\Aut(X))}\to \C H$ sends a $(X,\Aut(X))$-graph $G$ to the hypergraph $R(G)=(R(G)(E),R(G)(V),\psi)$ with vertex, edge sets and incidence map given by
\begin{align*}
& R(G)(V)=G(V),\\  & R(G)(E)=G(A)/\sim, \quad \text{($\sim$ induced by $\Aut(X)$)},\\
&\psi\colon R(G)(E)\to \C P(R(G)(V)), \quad [\gamma]\mapsto \setm{v\in G(V)}{\exists x\in X, \gamma.x=v}
\end{align*} 
For a $(X,\Aut(X))$-graph morphism $f\colon G\to G'$, the hypergraph morphism \\$R(f)\colon R(G)\to R(G')$ has $R(f)_V\defeq f_V$ and $R(f)_E\defeq [f_A]$ where $[f_A]\colon \frac{G(A)}{\sim} \to \frac{G'(A)}{\sim}$ is induced by the quotient.

\begin{example}\label{E:HyperNR} \mbox{}
	\begin{enumerate}
		\item \label{E:Creates} Let $X=\{a,b,c\}$ and consider the hypergraph $H$ with two vertices $0$ and $1$ and one hyperedge $\alpha$ between them. Then the nerve $N(H)$ has two vertices $0$ and $1$ and arc set $N(H)(A)=\{001, 010, 100, 011, 101, 110 \}$ with a pair of $\Aut(X)$-partners $001\sim 010\sim 100$ and $011\sim 101\sim 110$. The realization identifies the $\Aut(X)$-partners and thus $RN(H)$ has two vertices $0$ and $1$ and two edges $[001]$ and $[011]$ between them. The counit $\varepsilon_H\colon RN(H)\to H$ is bijective on vertex set and sends $[001]$ and $[011]$ to $\alpha$.
		\item Let $X=\{a,b,c\}$ and consider the hypergraph $H$ with four vertices and one hyperedge $\alpha$ connecting them. The nerve $N(H)$ has vertex set equal to $H(V)$ but empty arc set $N(H)(A)=\empset$.  The counit $\varepsilon_H\colon RN(H)\to H$ is the inclusion of vertices.
	\end{enumerate}
\end{example}

We are able to use the adjunction above to classify the projective objects in the category of hypergraphs. Recall that a right adjoint functor is faithful if and only if the counit is an epimorphism.

\begin{lemma}
	Let $X$ be a set with cardinality $\kappa$ greater than $1$. Then the nerve $N\colon \C H\to \widehat{\DD G}_{(X,\Aut(X))}$ of the interpretation $I\colon \DD G_{(X,\Aut(X))}\to \C H$ is faithful on the full subcategory $\C H_\kappa$ consisting of hypergraphs $H$ with $\max_{e\in H(E)}\varphi(e)$ at most of cardinality $\kappa$.
\end{lemma}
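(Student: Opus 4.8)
The plan is to deduce faithfulness from the realization–nerve adjunction $R\dashv N$ induced by $I$ together with the criterion recalled just above: a right adjoint is faithful exactly when its counit is an epimorphism. Since $N$ is the right adjoint, it suffices to show that the counit component $\varepsilon_H\colon RN(H)\to H$ is an epimorphism for every $H\in\C H_\kappa$. Indeed, for parallel maps $f,g\colon H\to H'$ in $\C H_\kappa$ with $N(f)=N(g)$, naturality of $\varepsilon$ gives $f\circ\varepsilon_H=\varepsilon_{H'}\circ RN(f)=\varepsilon_{H'}\circ RN(g)=g\circ\varepsilon_H$, so $\varepsilon_H$ being epic forces $f=g$. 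Thus faithfulness on $\C H_\kappa$ follows once each such counit is shown to be epic.

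Next I would identify the epimorphisms of $\C H=\C G_{\C P}$. Since the forgetful functor $U\colon\C G_{\C P}\to\Set\x\Set$ creates colimits, it preserves the cokernel pair of any morphism; as it is moreover faithful, hence reflects equality of maps, a morphism is epic in $\C H$ precisely when $U$ of it is epic in $\Set\x\Set$, i.e.\ when it is surjective on vertices and on edges. It therefore remains to check these two surjectivities for $\varepsilon_H$. Using the explicit formulas for $N$ and $R$, the component $\varepsilon_H$ is the identity on vertices (so trivially surjective), and on edges it is the map $[(\beta,f)]\mapsto\beta$ described in Example~\ref{E:HyperNR}, sending the class of an arc $(\beta,f)\in N(H)(A)$ to its underlying hyperedge $\beta$.

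The crux is therefore surjectivity on edges. Given $\beta\in H(E)$, a preimage is any pair $(\beta,f)\in N(H)(A)$, i.e.\ any set map $f\colon X\to H(V)$ whose image equals $\varphi(\beta)$; such an $f$ exists if and only if there is a surjection $X\twoheadrightarrow\varphi(\beta)$, i.e.\ iff $\#\varphi(\beta)\le\kappa$. This is exactly the defining condition of $\C H_\kappa$, so for every nonempty hyperedge the required arc is available and $\varepsilon_H$ is surjective on edges, hence epic.

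The main obstacle I expect is the boundary case of an empty hyperedge $\varphi(\beta)=\empset$. Since $\kappa>1$ forces $X\neq\empset$, there is no surjection $X\twoheadrightarrow\empset$, so an empty edge has no preimage under $\varepsilon_H$ and is simply not seen by $N$; two morphisms of $\C H_\kappa$ that differ only in where they send an empty edge would then have equal images under $N$. Handling this is the delicate step — one must either restrict $\C H_\kappa$ to hypergraphs with nonempty edges, or argue separately that a morphism in $\C H_\kappa$ is already pinned down on empty edges — and it is here, rather than in the formal adjunction bookkeeping, that the genuine content of the cardinality hypothesis lies.
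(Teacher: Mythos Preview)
Your approach is exactly the paper's: show that the counit $\varepsilon_H\colon RN(H)\to H$ is an epimorphism for each $H\in\C H_\kappa$, and conclude faithfulness from the right-adjoint criterion recalled just before the lemma. The paper's proof is a single sentence asserting that for every hyperedge $e\in H(E)$ there is a morphism $I(A)\to H$ hitting $e$; you unpack this into separate surjectivity on vertices and on edges, with more justification than the paper gives.

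Your worry about empty hyperedges is not an obstacle you have introduced --- it is a genuine gap in the paper's own argument (and, strictly speaking, in the statement). Since $\kappa>1$ forces $X\neq\empset$, no map $X\to H(V)$ has empty image, so an empty edge $\beta$ admits no arc $(\beta,f)\in N(H)(A)$ and hence no preimage under $\varepsilon_H$. Faithfulness in fact fails there: take $H$ with one vertex and one empty edge, $H'$ with one vertex and two empty edges; the two distinct morphisms $H\to H'$ selecting each empty edge satisfy $N(f)=N(g)$ because $N(H)(A)=\empset$. The paper's one-line proof simply overlooks this case, implicitly assuming every edge has nonempty incidence. Your proposed fix --- restricting $\C H_\kappa$ to hypergraphs without empty edges --- is exactly what is needed; the alternative you float (that morphisms are already determined on empty edges) does not work, as the example shows.
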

\begin{proof}
	Given hypergraph $H$ in the subcategory $\C H_{\kappa}$ the counit $\varepsilon_H\colon RN(H)\to H$ of the adjunction $R\dashv N\colon \C H\to \widehat{\DD G}_{(X,\Aut(X))}$ at component $H$ is an epimorphism since for each hyperedge $e\in H(E)$, there is a $(X,\Aut(X))$-graph morphism $f\colon I(A)\to H$ such that $f_E$ takes the lone hyperedge in $I(A)$ to $e$. 
\end{proof}

Let $I(V)$ be the hypergraph with one vertex and no hyperedges. For each cardinal number $k$, let $E_k\defeq I(A)$ be the hypergraph where $I\colon \sG_k\to \C H$ is the interpretation functor above. 

\begin{lemma}
	The proper class of objects consisting of the vertex object $I(V)$ and hyperedge objects $(E_k)_{k\in \Set}$ is a family of separators for the category of hypergraphs.  
\end{lemma}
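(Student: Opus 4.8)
The plan is to check the defining property of a separating family directly: for every pair of distinct parallel morphisms $(f_E,f_V),(g_E,g_V)\colon A\to B$ in $\C H$, I will produce an object $S$ in the family $\{I(V)\}\cup(E_k)_{k}$ and a morphism $h\colon S\to A$ with $fh\neq gh$. Everything hinges on an explicit reading of two hom-sets. Since $I(V)$ has one vertex and no edges, a morphism $I(V)\to H$ is just a choice of vertex in $H(V)$. Since $E_k=I(A)$ has a single edge incident to its whole vertex set $X$ (with $\#X=k$), the compatibility condition forces a morphism $E_k\to H$ to be a choice of edge $e\in H(E)$ together with a set map $h_V\colon X\to H(V)$ with image exactly $\varphi_H(e)$; such a morphism naming $e$ exists as soon as $\#\varphi_H(e)\le k$. (These are precisely the hom-sets $N(H)(V)$ and $N(H)(A)$ computed earlier.)

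The verification then splits into two cases covering all ways two morphisms can differ. If $f_V\neq g_V$, pick $v\in A(V)$ with $f_V(v)\neq g_V(v)$ and let $h\colon I(V)\to A$ name $v$; then $fh$ and $gh$ name distinct vertices of $B$, so $I(V)$ separates the pair. Otherwise $f_V=g_V$ but $f_E\neq g_E$, so there is an edge $\alpha\in A(E)$ with $f_E(\alpha)\neq g_E(\alpha)$. Put $k\defeq\#\varphi_A(\alpha)$ and fix a bijection $X\to\varphi_A(\alpha)$; followed by the inclusion $\varphi_A(\alpha)\hookrightarrow A(V)$ this yields a genuine morphism $h\colon E_k\to A$ naming $\alpha$. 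The composites $fh$ and $gh$ name the edges $f_E(\alpha)$ and $g_E(\alpha)$, which are distinct, so $E_k$ separates the pair.

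The step to get right---and the reason the family is a proper class rather than a set---is the second case: the incidence set $\varphi_A(\alpha)$ of an edge may have arbitrarily large cardinality, and to name $\alpha$ out of some $E_k$ one needs the matching index $k=\#\varphi_A(\alpha)$ present so that the required surjection $X\twoheadrightarrow\varphi_A(\alpha)$ (here a bijection) exists. No set-indexed subfamily of the $E_k$ detects edges of unbounded arity; this is the same phenomenon as in the preceding lemma, where a single nerve is faithful only after bounding edge cardinality by $\kappa$, so that $\{I(V),E_\kappa\}$ separates only on $\C H_\kappa$. The boundary index $k=0$ is harmless: $E_0$ has empty vertex set and one empty edge, and naming an empty edge of $A$ is immediate.
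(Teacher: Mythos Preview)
Your proof is correct. You verify the separating property directly by a two-case analysis: if the vertex components differ, $I(V)$ detects it; if only the edge components differ at some edge $\alpha$, you build a morphism $E_k\to A$ with $k=\#\varphi_A(\alpha)$ naming $\alpha$, and this detects the difference. The construction of $h\colon E_k\to A$ is sound, and the compatibility $\C P(h_V)(X)=\varphi_A(\alpha)$ holds because you chose $h_V$ to be a bijection onto $\varphi_A(\alpha)$.

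The paper takes a different route: given distinct $f,g\colon H\to H'$, it chooses a single cardinal $\kappa$ large enough that $H,H'\in\C H_\kappa$, invokes the preceding lemma to conclude that the nerve $N\colon\C H\to\widehat{\DD G}_{(X,\Aut(X))}$ (with $\#X=\kappa$) is faithful on $\C H_\kappa$, and then reads off from $N(f)\neq N(g)$ that either $I(V)$ or $I(A)=E_\kappa$ separates the pair, since $N(-)(V)=\C H(I(V),-)$ and $N(-)(A)=\C H(I(A),-)$. So the paper uses one fixed $E_\kappa$ per pair $(f,g)$, large enough to cover every edge of $H$, whereas you use the $E_k$ tailored to the particular edge witnessing $f_E\neq g_E$. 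Your argument is more elementary and self-contained, avoiding the nerve machinery and the mild awkwardness of choosing a global bound $\kappa$; the paper's argument, on the other hand, shows how the lemma is a formal consequence of the faithfulness result already established, which is the structural point the author wants to emphasize.
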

\begin{proof}
	Let $f, g\colon H\to H'$ be distinct hypergraph morphisms. Let $\kappa$ be the maximum of the cardinalities such that $H$ and $H'$ are in the subcategory $\C H_{\kappa}$. By the lemma above, the nerve $N\colon \C H\to \widehat{\DD G}_{(X,\Aut(X))}$ is faithful on $\C H_\kappa$. Thus we have $N(f)\neq N(g)$. Therefore, either $I(V)$ separates $f$ and $g$ or $I(\underline A)=E_k$ separates $f$ and $g$ by definition of the nerve functor. 
\end{proof}

\begin{proposition}
	A hypergraph $H$ is (regular) projective if and only if it has no hyperedges.  
\end{proposition}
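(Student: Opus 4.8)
The plan is to prove both directions after first pinning down the regular epimorphisms of $\C H$. Because the forgetful functor $U\colon \C H\to \Set\x\Set$ creates colimits, I would compute the coequalizer of a kernel pair componentwise and observe that the incidence map of the quotient is forced by the morphism condition; this shows that $(f_E,f_V)$ is a regular epimorphism exactly when both $f_E$ and $f_V$ are surjective. For the argument below the only consequence I need is that every regular epimorphism is surjective on vertices and on edges.

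For the implication that an edgeless hypergraph is projective, I would write such an $H$ as the coproduct $\coprod_{H(V)} I(V)$ of copies of the one-vertex hypergraph $I(V)=(\empset,1,!_1)$ and reduce to showing $I(V)$ is projective. A morphism $I(V)\to A$ is nothing but a choice of a vertex of $A$, and since any regular epimorphism $B\to A$ is surjective on vertices, a preimage of that vertex determines a lift $I(V)\to B$ (there is no edge datum to be matched). Closure of projectives under coproducts then finishes this direction.

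For the converse I would argue contrapositively and extract an explicit lifting failure from a single hyperedge. Choose $e_0\in H(E)$ and set $S\defeq\varphi(e_0)$, $n\defeq \#S$ (the substantive case being $n\ge 1$). Let $A$ be the one-vertex hypergraph carrying a single loop $\alpha$ with $\varphi(\alpha)=\{*\}$ (together with one empty edge, should $H$ possess empty edges, so that $f$ below is total), and let $f\colon H\to A$ collapse every vertex to $*$ and send $e_0$ to $\alpha$. Let $B$ have $n+1$ vertices and a single edge $\beta$ incident to all of them, and let $q\colon B\to A$ be the collapsing map; it is surjective on both components and hence a regular epimorphism. Any lift $k\colon H\to B$ of $f$ must carry $e_0$ to the unique edge $\beta$ lying over $\alpha$, whence $\varphi(\beta)=\C P(k_V)(S)=\{k_V(v):v\in S\}$; but the right-hand set has cardinality at most $n<n+1=\#\varphi(\beta)$, a contradiction. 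Thus $f$ does not factor through $q$, so $H$ is not projective.

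I expect the main obstacle to be the clean identification of regular epimorphisms together with the verification that the collapsing assignments $f$ and $q$ are genuine hypergraph morphisms, i.e.\ that the defining incidence squares commute. Once the cover $B$ is arranged so that every edge over $\alpha$ is incident to strictly more vertices than $e_0$, the cardinality count makes the non-lifting immediate, and the remaining bookkeeping is routine.
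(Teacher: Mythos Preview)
Your argument is correct in its essentials and is more direct than the paper's. The paper first invokes the separator family $\{I(V)\}\cup\{E_k\}_k$ to claim it suffices to show each $E_k$ fails to be projective, and then exhibits the regular epimorphism $E_r\to 1$ (with $r>k$) through which $E_k\to 1$ cannot factor. You instead build the analogous obstruction directly for an arbitrary $H$ containing an edge, bypassing the reduction; the underlying mechanism---an edge incident to $n$ vertices cannot map to an edge incident to strictly more---is identical in both proofs. Your route has the advantage that the reduction step in the paper (from ``each $E_k$ is not projective'' to ``each $H$ with an edge is not projective'') is not actually spelled out there, whereas your construction needs no such step.

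There is one bookkeeping slip. When $H$ possesses empty edges you correctly adjoin an empty edge to $A$ so that $f$ is total, but you do not adjoin one to $B$. Without it $q_E$ misses the empty edge of $A$, so $q$ is not surjective on edges and hence not a regular epimorphism. The repair is immediate: give $B$ an empty edge as well and let $q$ carry it to the empty edge of $A$; the cardinality contradiction at $e_0$ is untouched, since $\beta$ remains the unique edge of $B$ lying over $\alpha$. Relatedly, the parenthetical ``the substantive case being $n\ge 1$'' should be made operative by \emph{choosing} $e_0$ nonempty whenever $H$ has a nonempty edge, because your construction does not go through when $e_0$ is empty (sending an empty edge to $\alpha$ violates the incidence square). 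The paper's argument has the same blind spot at $k=0$, since there is no morphism $E_0\to 1$.
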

\begin{proof}
	If a hypergraph has no edges it is projective since it is a coproduct of the vertex object $I(V)$ which is clearly projective. Conversely, by the lemma above it is enough to show that each hyperedge object $E_k$ is not projective. Let $1$ be the terminal hypergraph with one vertex and one hyperedge. Then every morphism $E_k\to 1$ is a (regular) epimorphism. Let $E_r$ be the hyperedge object with $r$ vertices where $r$ is of cardinality strictly greater than $k$. Since there is no morphism from $E_k$ to $E_r$ there is no factorization of $E_k\to 1$ through $E_r$ showing $E_k$ is not (regular) projective.
\end{proof}

For each set $X$, the interpretation functor $I\colon \DD G_{(X,\Aut(X))}\to \C H$ factors through the full subcategory $\C H_{k}$ of hypergraphs consisting of hypergraphs $H$ such that the incidence of each edge is of cardinality less than or equal to the cardinality $k$ of $X$. In other words, $\C H_k$ is the slice category $\Set\ls \C P_k$ where $\C P_k$ is the covariant $k$-power set functor which takes a set to the set of all subsets with cardinality less than or equal to $k$. The inclusion functor $i\colon \C H_k\hookrightarrow \C H$ admits a coreflector $r\colon \C H\to \C H_k$ which takes a hypergraph $H$ to the hypergraph $r(H)$ with vertex set $r(H)(V)=H(V)$ and edge set $r(H)(E)\defeq \setm{\alpha\in H(E)}{\#\varphi(\alpha)\leq k}$. Therefore the nerve realization for the interpretation $I_k\defeq rRy\colon \DD G_{(X,\Aut(X))}\to \C H_k$ is $R_k\dashv N_k\colon \C H_k\to \widehat{\DD G}_{(X,\Aut(X))}$ where $R_k=rR$ and $N_k=Ni$. Moreover, by restricting to the subcategory $\C H_k$ the counit $\varepsilon_H\colon rRNi(H)\Rightarrow H$ is now an epimorphism.

\begin{proposition}
	For a cardinal number $k$, the class of projective objects in $\C H_k$ are precisely the coproducts of $I_k(V)$ and $I_k(A)$ where $I_k\colon \DD G_{(X,\Aut(X))}\to \C H_k$ is the interpretation functor described above. 
\end{proposition}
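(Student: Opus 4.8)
The plan is to transport the classification of projectives in $\widehat{\DD G}_{(X,\Aut(X))}$ across the adjunction $R_k\dashv N_k$, proving the two inclusions separately and mimicking the proofs of the two projectivity results of Section \ref{S:InjProj}.

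For the inclusion ``coproduct $\Rightarrow$ projective'', I would first note that $I_k(V)$ is projective: a morphism $I_k(V)\to A$ merely names a vertex of $A$, and every regular epimorphism $e\colon B\to A$ in $\C H_k=\Set\ls\C P_k$ is surjective on vertices, since the forgetful functor to $\Set\x\Set$ creates colimits and hence computes coequalizers componentwise; thus the naming lifts. For $I_k(A)$ I would use the adjunction: because $I_k=R_ky$ we have $I_k(A)\iso R_k(\underline A)$, and $\underline A$ is projective in $\widehat{\DD G}_{(X,\Aut(X))}$, so it suffices to know that $R_k$ preserves projectives, equivalently that its right adjoint $N_k$ preserves regular epimorphisms. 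Since projectives are closed under coproducts, every coproduct of copies of $I_k(V)$ and $I_k(A)$ would then be projective.

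For the converse, let $P$ be projective. Applying $\Proj=\iota_!\iota^*$ to $N_k(P)$ gives a coproduct of representables together with its epimorphic counit $\Proj N_k(P)\to N_k(P)$; applying the colimit-preserving functor $R_k$, using $R_k\Proj N_k(P)\iso\bigsqcup_{N_k(P)(V)}I_k(V)\sqcup\bigsqcup_{N_k(P)(A)}I_k(A)$, and composing with the nerve--realization counit $\varepsilon_P\colon R_kN_k(P)\to P$ (an epimorphism on $\C H_k$), I obtain an epimorphism $\pi$ from a coproduct of $I_k(V)$ and $I_k(A)$ onto $P$. Projectivity of $P$ splits $\pi$, exhibiting $P$ as a retract of such a coproduct. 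I would finish by decomposing $P$ into connected components: the section carries each connected component of $P$ into a single summand $I_k(V)$ or $I_k(A)$ (its image being connected), and a short check shows the only connected retract of $I_k(V)$ is $I_k(V)$ and the only connected retract of $I_k(A)$ is $I_k(A)$; hence every component of $P$ is one of these and $P$ is a coproduct of $I_k(V)$ and $I_k(A)$.

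The main obstacle is the projectivity of $I_k(A)$, i.e.\ that $N_k$ preserves regular epimorphisms. Unwinding $N_k$, this is the concrete lifting problem: given a regular epimorphism $e\colon B\to A$, an edge $\alpha$ of $A$, and a surjection $f\colon X\twoheadrightarrow\varphi_A(\alpha)$, produce an edge $\beta$ over $\alpha$ and a surjection $g\colon X\twoheadrightarrow\varphi_B(\beta)$ with $e_Vg=f$. This is exactly the step where the hypothesis $\#X=k$ must be used, through the constraint $\#\varphi_B(\beta)\le k=\#X$: one attempts to build $g$ fibrewise over $\varphi_A(\alpha)$, and the delicate point is matching, for each $v\in\varphi_A(\alpha)$, the cardinality of the fibre $f^{-1}(v)$ against that of $e_V^{-1}(v)\cap\varphi_B(\beta)$ while keeping $g$ surjective. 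I expect controlling these fibre cardinalities — and choosing $\beta$ compatibly across the summands produced by $\Proj$ — to be the crux; the retract-decomposition step above is then routine once the connected components of objects of $\C H_k$ are described.
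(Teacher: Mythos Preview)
Your overall architecture coincides with the paper's: the converse direction is obtained exactly via the composite $R_k(\Proj(N_k(P)))\twoheadrightarrow R_kN_k(P)\xrightarrow{\varepsilon_P} P$, and both arguments reduce the forward direction to the projectivity of $I_k(A)$. The paper dispatches that step in one line (``since a hypergraph $H$ in $\C H_k$ has an edge if and only if it admits a morphism from $I_k(A)$ to it \ldots\ it is clear $I_k(A)$ is projective''), whereas you correctly isolate it as the crux and recast it as the fibrewise lifting problem for $N_k$ against regular epimorphisms. Your retract-via-components argument at the end is a legitimate elaboration of the paper's ``it is clear that the only split subobjects are coproducts of $I_k(V)$ and $I_k(A)$.''

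The obstacle you flag is not merely delicate; for $k\ge 3$ it is insurmountable, and the statement as written is false. Take $k=3$, $X=\{1,2,3\}$; let $B$ have vertex set $\{p,q,r\}$ and one edge $\beta$ with $\varphi_B(\beta)=\{p,q,r\}$, let $A$ have vertex set $\{a,b\}$ and one edge $\alpha$ with $\varphi_A(\alpha)=\{a,b\}$, and let $e\colon B\to A$ send $p\mapsto a$, $q,r\mapsto b$, $\beta\mapsto\alpha$. Then $e$ is surjective on vertices and edges, hence a regular epimorphism in $\C H_3$. The morphism $h\colon I_3(A)\to A$ with $h_V(1)=h_V(2)=a$, $h_V(3)=b$ admits no lift: any $g\colon I_3(A)\to B$ over $h$ must have $g_V(1)=g_V(2)=p$ and $g_V(3)\in\{q,r\}$, so $g_V(X)\subsetneq\varphi_B(\beta)$ and $g$ is not a morphism. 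Thus $I_3(A)$ is not projective and $N_k$ does not preserve regular epimorphisms. The paper's argument silently conflates ``some morphism $I_k(A)\to B$ exists'' with ``the given morphism $I_k(A)\to A$ lifts''; your instinct that this step required real work was right, but the work cannot be completed. (For $k\le 2$ the fibre-matching does succeed, since with $\#X\le 2$ the pigeonhole obstruction above cannot occur.)
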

\begin{proof}
	It is clear that $I_k(V)$ is projective. Since a hypergraph $H$ in $\C H_k$ has an edge if and only if it admits a morphism from $I_k(A)$ to it and since epimorphisms in $\C H_k$ are those morphisms surjective on vertex and edge sets, it is clear $I_k(A)$ is projective. Therefore the coproducts of $I_k(V)$ and $I_k(A)$ are projective. Conversely, consider the following composition 
	\[
	\xymatrix{ R_k(\Proj(N(H))) \ar@{->>}[r] & R_kN_k(H) \ar@{->>}[r]^-{\varepsilon_H} & H}
	\]
	where the morphism $\xymatrix{R_k(\Proj(N(H))) \ar@{->>}[r] & R_kN_k(H)}$ is the application of $R_k$ on the epimorphism $\Proj(N(H))\to N(H)$ described in Section \ref{S:InjProj}. Note that since $R_k$ is a left adjoint, it preserves epimorphisms. Moreover, $R_k$ preserves colimits, therefore $R_k(\Proj(N(H)))=\bigsqcup_{N(H)(V)}R_k\underline V\sqcup \bigsqcup_{N(H)(A)}R_k\underline A$. Then since $R_ky=I_k$ where $y$ is the Yoneda embedding, we have $R_k(\Proj(N(H)))=\bigsqcup_{N(H)(V)}R_k\underline I_k(V)\sqcup \bigsqcup_{N(H)(A)}I_k(A)$. Therefore, every object in $\C H_k$ admits an epimorphism from a projective object (i.e., $\C H_k$ has enough projectives). Thus every projective in $\C H_k$ is a split subobject of the essential image of the functor $R_k\Proj\colon \widehat{\DD G}_{(X,\Aut(X))}\to \C H_k$. However it is clear that the only split subobjects are coproducts of $I_k(V)$ and $I_k(A)$. 
	
\end{proof}

\begin{proposition}
	A hypergraph $Q$ is injective if and only if $Q$ is non-initial and for each subset of $S\subseteq Q(V)$, there is an edge $\alpha\in Q(E)$ with incidence equal to $S$.
\end{proposition}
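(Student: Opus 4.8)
The plan is to follow the template of Proposition \ref{P:Injective}, replacing the $X$-parametrized incidences $\partial_G(\alpha)\colon X\to G(V)$ by the subset incidences $\varphi(\alpha)\in\C P(Q(V))$ and the representable arc object by the hyperedge objects $E_k$. The one structural fact I need up front, which is not automatic since $\C H$ is a comma category rather than a presheaf category, is that a hypergraph morphism $(f_E,f_V)$ is a monomorphism precisely when both $f_E$ and $f_V$ are injective. The easy direction is immediate; for the converse I would probe with the generating objects of Section \ref{S:PGraphs}: injectivity of $f_V$ is detected by the vertex object $I(V)$ (whose maps into a hypergraph name vertices), and injectivity of $f_E$ is detected by taking edges $\alpha,\alpha'$ with $\varphi(\alpha)=\varphi(\alpha')=S$, choosing a surjection from the vertex set of $E_{\#S}$ onto $S$, and comparing the two induced maps $E_{\#S}\rightrightarrows G$.

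For the forward implication, assume $Q$ is injective and fix a subset $S\subseteq Q(V)$ of cardinality $\kappa\defeq \#S$. Let $D_S\defeq \bigsqcup_{s\in S}I(V)$ be the discrete hypergraph on $S$ and let $m\colon D_S\to E_\kappa$ be the inclusion that is the identity on vertices (identifying the vertex set of $E_\kappa$ with $S$) and empty on edges; this is a monomorphism. Applying injectivity to $m$ and to the classifying morphism $\overline\iota\colon D_S\to Q$ of the inclusion $S\hookrightarrow Q(V)$ yields an extension $k\colon E_\kappa\to Q$ with $km=\overline\iota$. Since every vertex of $E_\kappa$ lies in $S$, the relation $km=\overline\iota$ forces $k_V$ to be the inclusion $S\hookrightarrow Q(V)$, and naturality of $\C P$ then gives $\varphi_Q(k_E(e))=\C P(k_V)(S)=S$ for the unique edge $e$ of $E_\kappa$. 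Thus $k_E(e)$ is an edge with incidence exactly $S$. Taking $S=\empset$ already produces an edge, so $Q$ is in particular non-initial, and no separate argument for non-initiality is needed.

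For the converse, assume $Q$ is non-initial and has an edge of every incidence $S\subseteq Q(V)$; non-initiality furnishes a vertex $v\in Q(V)$, exactly as in Proposition \ref{P:Injective}. Given a monomorphism $m\colon G\hookrightarrow H$ (so $m_E,m_V$ injective by the first step) and a morphism $g\colon G\to Q$, I would define $h_V\colon H(V)\to Q(V)$ by $m_V(u)\mapsto g_V(u)$ on the image of $m_V$ and $w\mapsto v$ elsewhere, and $h_E\colon H(E)\to Q(E)$ by $m_E(\gamma)\mapsto g_E(\gamma)$ on the image of $m_E$ and, for each edge $\beta$ outside the image, by choosing (using the hypothesis) an edge of $Q$ with incidence $\C P(h_V)(\varphi_H(\beta))$. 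That $h=(h_E,h_V)$ is a morphism is clear on new edges by the choice, and on image edges it reduces to the identity $\C P(h_V\circ m_V)(\varphi_G(\gamma))=\C P(g_V)(\varphi_G(\gamma))=\varphi_Q(g_E(\gamma))$, which holds because $h_V\circ m_V=g_V$ and both $m$ and $g$ are morphisms. By construction $hm=g$, so $Q$ is injective.

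The main obstacle is the monomorphism characterization at the start: unlike in the presheaf setting of Proposition \ref{P:Injective}, where Yoneda makes ``componentwise injective'' transparent, here I must verify by hand that $f_E$ is injective using the hyperedge probes $E_k$, and this is exactly what licenses both the well-definedness of $h_E$ on the image of $m_E$ and the identification $h_V\circ m_V=g_V$ in the converse. The remaining verifications — the naturality bookkeeping for $\C P$ and the pointwise, coherence-free choice of edges for the new edges of $H$ — are routine.
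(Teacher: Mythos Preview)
Your proof is correct and follows essentially the same strategy as the paper's: probe with the inclusion $\bigsqcup_S I(V)\hookrightarrow E_S$ for the forward direction, and build the extension $h$ piecewise for the converse. Your version is in fact a bit more careful than the paper's---you explicitly verify that monomorphisms in $\C H$ are componentwise injective, you derive non-initiality from the $S=\empset$ case rather than leaving it implicit, and your choice of incidence $\C P(h_V)(\varphi_H(\beta))$ for new edges is exactly what the morphism condition requires (the paper's formula $g_V(f_V^{-1}(S_0))\cup\{v\}$ tacitly assumes $S_1\neq\empset$).
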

\begin{proof}
	Suppose $Q$ is injective. For each subset $S\subseteq Q(V)$, let $E_S$ be the hypergraph with one edge $e$ with incident equal to $\varphi(e)=S$. Let $f\colon \bigsqcup_S I(V)\hookrightarrow E_S$ and $g\colon \bigsqcup_S I(V)\to Q$ be the inclusions of vertices. Since $Q$ is injective there is a morphism $h\colon E_S\to Q$ which necessarily is a monomorphism. Hence $Q$ must have an edge $q$ with incidence $\varphi(q)=S$. 
	
	Conversely, let $f\colon G\to H$ be a monomorphism and $g\colon G\to Q$ be a morphism in the category of hypergraphs. Since $Q$ is non-initial, there is a vertex $v\in Q(V)$. We define the morphism $h\colon H\to Q$ on vertices
	\[
	h_V(w)\defeq \begin{cases} g_V(u) & \text{if }\exists u\in G(V), g_V(u)=w \\ v & \text{if } \forall u\in G(V), g_V(u)\neq w. \end{cases}
	\]
	Each edge $e$ in $H$ not in the image of $f_A$ has incidence subset $S\subseteq H(V)$ which can be decomposed $S\iso S_0\sqcup S_1$ such that $S_0$ is in the image of $f_V$ and $S_1$ is disjoint to the image of $f_V$. Then for such an edge $e$ in $H(E)$, choose an edge $[e]\in Q(E)$ with incidence $g(f_V^{-1}(S_0))\cup \{v\}$ where $g_V(f_V^{-1}(S_0))$ is the image of $f_V^{-1}(S_0)$ under $g_V$. Then we define 
	\[
	h_E(e)\defeq \begin{cases} g_E(b) & \text{if }\exists b\in G(E), g_E(b)=e \\ [e]  & \text{if }\forall b\in G(E), g_E(b)\neq e \end{cases}
	\]
	Then $h_V$ and $h_E$ describe a morphism of hypergraphs $h\colon H\to Q$ such that $h\circ f=g$. Therefore, $Q$ is injective.
\end{proof}
\begin{corollary}
	Let $Q$ be a hypergraph and $X\defeq Q(V)$. Then $Q$  is injective if and only if $N(Q)$ is injective as a $(X,\Aut(X))$-graph where $N$ is the nerve of the interpretation $I\colon \DD G_{(X,\Aut(X))}\to \C H$ defined above.
\end{corollary}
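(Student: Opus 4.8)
The plan is to read off both injectivity conditions from the two characterizations already established---Proposition \ref{P:Injective} for $(X,M)$-graphs and the preceding proposition for hypergraphs---and, after substituting $X\defeq Q(V)$, to show that they describe the same family of realized incidences. Concretely, $Q$ is injective as a hypergraph exactly when it is non-initial and every subset $S\subseteq Q(V)$ occurs as $\varphi(\alpha)$ for some edge $\alpha$, while $N(Q)$ is injective exactly when it is non-initial and every parametrized incidence is realized by an arc. So the whole proof should reduce to translating ``every self-map of $Q(V)$ is realized'' into ``every subset of $Q(V)$ is realized.''

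First I would unravel the injectivity condition for $N(Q)$. Using the formulas for the nerve, $N(Q)(V)=Q(V)=X$ and $N(Q)(A)=\setm{(\beta,f)\in Q(E)\x Q(V)^X}{\operatorname{im} f=\varphi(\beta)}$, together with the fact that the parametrized incidence of the arc $(\beta,f)$ is $\partial_{N(Q)}((\beta,f))=f$. Applying Proposition \ref{P:Injective} with the monoid $\Aut(X)$, the graph $N(Q)$ is injective iff it is non-initial and for every set map $f\colon X\to X$ there is an arc with incidence $f$, i.e.\ an edge $\beta\in Q(E)$ with $\varphi(\beta)=\operatorname{im} f$. Thus $N(Q)$ is injective iff it is non-initial and every subset of $Q(V)$ of the form $\operatorname{im} f$ is realized as an edge incidence.

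The key step is the correspondence $f\mapsto\operatorname{im} f$ between self-maps of $X=Q(V)$ and subsets of $Q(V)$. When $Q(V)$ is non-empty this map surjects onto the non-empty subsets: any non-empty $S\subseteq Q(V)$ is the image of a surjection $Q(V)\twoheadrightarrow S\hookrightarrow Q(V)$, and conversely every image of a self-map is a subset. I would match this against the hypergraph condition ``every $S\subseteq Q(V)$ is realized,'' and separately verify that the two non-initiality requirements agree: if $Q(V)\neq\empset$ both $Q$ and $N(Q)$ are automatically non-initial, while if $Q(V)=\empset$ then $X=\empset$, the unique empty map has image $\empset$, and $N(Q)(A)\iso Q(E)$, so $N(Q)$ is non-initial iff $Q$ has an edge iff $Q$ is injective; this disposes of the degenerate case directly.

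The hard part will be the empty incidence. The hypergraph characterization demands an edge with $\varphi(\alpha)=\empset$, but when $Q(V)\neq\empset$ no self-map of $X=Q(V)$ has empty image, so the correspondence $f\mapsto\operatorname{im} f$ never produces $\empset$ and the injectivity of $N(Q)$ does not by itself force an empty edge of $Q$. This is exactly the point requiring the most care: one must either argue that the empty subset is already accounted for, restrict to the appropriate non-degenerate regime, or supply a separate argument linking the empty edge to non-initiality. I expect everything else to be routine bookkeeping between the two characterizations, with this empty-edge reconciliation the single genuinely delicate step.
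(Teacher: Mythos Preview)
Your approach is the same as the paper's: both reduce to the two characterizations and the correspondence $f\mapsto\operatorname{im}f$ between self-maps of $X=Q(V)$ and subsets of $Q(V)$. The paper's converse argument simply says ``let $S\subseteq Q(V)$ a subset of vertices. Let $j\colon X\to N(Q)(V)$ be a set map with image equal to $S$,'' and proceeds exactly as you outline.

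The empty-edge concern you raise is real, and it is not addressed in the paper's proof either. When $Q(V)\neq\empset$ and $S=\empset$, there is no map $j\colon X\to X$ with $\operatorname{im}j=\empset$, so the step ``let $j$ be a set map with image equal to $S$'' in the paper's converse direction simply fails. Concretely, take $Q$ to have a single vertex $v$ and a single edge $e$ with $\varphi(e)=\{v\}$. Then $X=\{v\}$, the unique self-map of $X$ has image $\{v\}$, and $N(Q)$ has one vertex and one arc realizing that incidence, so $N(Q)$ is injective by Proposition~\ref{P:Injective}. But $Q$ has no edge with empty incidence, so by the preceding proposition $Q$ is \emph{not} an injective hypergraph. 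Thus the direction $N(Q)$ injective $\Rightarrow$ $Q$ injective is false as stated, and your instinct that this is ``the single genuinely delicate step'' is exactly right: it is not merely delicate but an actual gap in the argument and in the statement. The forward direction ($Q$ injective $\Rightarrow$ $N(Q)$ injective) goes through without trouble, just as you and the paper both argue.
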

\begin{proof}
	Let $Q$ be an injective hypergraph. By Proposition \ref{P:Injective}, it is enough to show that for each set map  $j\colon X\to N(Q)(V)$, there is an arc $\alpha\in N(Q)(A)$ such that $\partial(\alpha)=j$. The image of $j$ describes a subset $S$ of vertices in $Q$. Therefore by the result above, there is a hyperedge $e$ with incidence equal to $S$. Let $\alpha\colon I(A)\to Q$ be the arc in $N(Q)$ corresponding to the hypergraph morphism which takes the vertex $x$ to $j(x)$ for each $x\in X$ and the single hyperedge $a\in I(A)$ to $e$. Then $\partial(\alpha)=j$ and thus $N(Q)$ is an injective $(X,\Aut(X))$-graph.
	
	Conversely, suppose $N(Q)$ is injective and let $S\subseteq Q(V)$ a subset of vertices. Let $j\colon X\to N(Q)(V)$ be a set map with image equal to $S$. Then there is an arc $\alpha\in N(Q)(A)$ with incidence $\partial_Q(\alpha)=j$. Since $\alpha$ corresponds to the hypergraph morphism $\alpha\colon I(A)\to Q$, there must be an edge $e\in Q$ such that $a\in I(A)$ is mapped to $e$, i.e., $e$ has incident equal to $S$. Therefore, $Q$ is an injective hypergraph. 
\end{proof}

\subsubsection{The Category of $\underline \Pi_X$-Graphs} \label{S:UnderlinePiGraphs}

Let $X$ and $Y$ be sets. We define the symmetric $X$-power of $Y$, denoted $\underline \Pi_X(Y)$, as the multiple coequalizer of $(\underline \sigma\colon \Pi_X(Y) \to \Pi_X(Y))_{\sigma\in \Aut(X)}$ where $\underline \sigma$ is the $\sigma$-shuffle of coordinates in the product. This definition extends to a functor $\underline{\Pi} _{X}\colon \Set \to \Set$. Note that if $j\colon X'\to X$ is a set map, then there is a natural transformation $\underline{\Pi} _{X}\Rightarrow \underline{\Pi} _{X'}$ induced by the universal mapping property of the product. In particular, when $X\to X'=1$ is the terminal map, we have $\Id_{\Set}=\underline{\Pi} _1\Rightarrow \underline{\Pi} _X$ which we denote by $\eta\colon \Id_\Set\Rightarrow \underline{\Pi} _X$.\footnote{Note that in the case $X=2$, the category of $\underline \Pi_X$-graphs is the category of undirected graphs in the conventional sense in which morphisms are required to map edges to edges.  }

To define an interpretation functor $I\colon \DD G_{(X,\Aut(X))}\to \C G_{\underline \Pi_X}$, we let $q$ be the unordered set $(x)_{x\in X}$ in $\underline \Pi_X(X)$.  Since $\underline \Pi_X(\sigma)(x)_{x\in X}=(x)_{x\in X}$ for each automap $\sigma\colon X\to X$, the interpretation is well-defined.

\begin{proposition}
	The interpretation $I\colon \DD G_{(X,\Aut(X))}\to \C G_{\underline \Pi_X}$ is dense.
\end{proposition}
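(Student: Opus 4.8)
The plan is to invoke the criterion recalled earlier (\cite{sM}, Section X.6) that an interpretation is dense precisely when its nerve is full and faithful, and to verify these two properties for the nerve $N\colon \C G_{\underline \Pi_X}\to \widehat{\DD G}_{(X,\Aut(X))}$. First I would unwind $N$ exactly as in the hypergraph case: one computes $N(G)(V)=\C G_{\underline \Pi_X}(I(V),G)=G(V)$, while a morphism $I(A)\to G$ is the same as a pair $(\beta,f)$ consisting of an edge $\beta\in G(E)$ and a function $f\colon X\to G(V)$ whose class $[f]$ in $\underline \Pi_X(G(V))\iso G(V)^X/\Aut(X)$ equals $\partial_G(\beta)$; thus $N(G)(A)=\setm{(\beta,f)}{[f]=\partial_G(\beta)}$. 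I would also record the two structure maps that any $(X,\Aut(X))$-graph morphism out of $N(G)$ must respect: the $x$-incidence sends $(\beta,f)$ to $f(x)$, and the right $\sigma$-action sends $(\beta,f)$ to $(\beta,f\circ\sigma)$, leaving the edge-component fixed (this uses the formula $\partial(\alpha.m)=\partial(\alpha)\circ m$ from Section \ref{S:XMG} together with $I(\sigma)=(\Id_1,\sigma)$).

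For faithfulness, suppose $\phi,\psi\colon G\to H$ satisfy $N(\phi)=N(\psi)$. On vertices this is already $\phi_V=\psi_V$. For edges, the key point is that the quotient $G(V)^X\to \underline \Pi_X(G(V))$ is surjective, so every edge $\beta$ has a lift $f$ with $[f]=\partial_G(\beta)$, making $(\beta,f)$ an arc of $N(G)$; comparing edge-components in $N(\phi)(\beta,f)=(\phi_E(\beta),\phi_V f)$ and $N(\psi)(\beta,f)$ forces $\phi_E(\beta)=\psi_E(\beta)$, so $\phi=\psi$.

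For fullness, given $\Theta\colon N(G)\to N(H)$ I would set $\phi_V\defeq \Theta_V$ and define $\phi_E(\beta)$ to be the edge-component of $\Theta_A(\beta,f)$ for a chosen lift $f$ of $\partial_G(\beta)$. Three checks remain. First, independence of the lift: any two lifts satisfy $g=f\circ\sigma$ for some $\sigma\in\Aut(X)$, i.e.\ $(\beta,g)=(\beta,f).\sigma$, so by equivariance $\Theta_A(\beta,g)=\Theta_A(\beta,f).\sigma$ has the same edge-component. Second, the morphism condition: writing $\Theta_A(\beta,f)=(\beta',f')$, compatibility with the $x$-incidence gives $f'=\Theta_V\circ f=\phi_V\circ f$, whence $\partial_H(\phi_E(\beta))=[f']=\underline \Pi_X(\phi_V)(\partial_G(\beta))$, so the defining square of a $\underline \Pi_X$-graph morphism commutes. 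Third, $N(\phi)=\Theta$, which holds on vertices and arcs directly by construction.

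I expect the one genuinely delicate step to be the well-definedness of $\phi_E$. This is exactly where the choice of monoid $\Aut(X)$ is forced to match the quotient defining $\underline \Pi_X$: the ambiguity in lifting an \emph{unordered} incidence tuple to an \emph{ordered} function $X\to G(V)$ is precisely the action of some $\sigma\in\Aut(X)$, and it is the $\Aut(X)$-equivariance of $\Theta$ that makes the edge-component of $\Theta_A(\beta,f)$ blind to this ambiguity. Everything else is routine unwinding of the nerve.
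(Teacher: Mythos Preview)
Your proof is correct, but it proceeds along the reverse direction from the paper. The paper verifies density directly from the colimit characterization: given a cocone $\lambda$ over the canonical diagram $I\ls (E,V,\varphi)\to \C G_{\underline \Pi_X}$, it builds the unique factorizing morphism $(h,k)$ by evaluating $\lambda$ on the generating objects $(\named e,f)\colon I(A)\to(E,V,\varphi)$ and $(!_E,\named v)\colon I(V)\to(E,V,\varphi)$, and then checks the incidence square. Only afterwards does the paper record, as a corollary, that the nerve is full and faithful. You instead invoke Mac Lane's equivalence up front and prove full faithfulness of $N$ by hand, from which density follows. The underlying computation is essentially the same---in both arguments the crux is choosing a lift $f\colon X\to G(V)$ of an unordered incidence and showing the resulting assignment is independent of that choice because the ambiguity is exactly an $\Aut(X)$-translate---but your packaging makes the role of the $\Aut(X)$-equivariance more explicit, and it yields the corollary and the proposition in one stroke rather than two. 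The paper's direct colimit argument, on the other hand, is closer to the definition and avoids appealing to the external criterion.
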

\begin{proof}
	Let $(E,V,\varphi)$ and $(K,L,\psi)$ be $\C G_{\underline \Pi_X}$-objects and $\lambda\colon D\Rightarrow \Delta(K,L,\psi)$ a cocone on the diagram $D\colon I\ls (E,V,\varphi)\to \C G_{\underline \Pi_X}$. 
	Let $e$ be an edge in $E$ and $f\colon X\to V$ be the set morphism with $\underline{\Pi} _Xf=\varphi(e)$. Then $(\named e, f)\colon I(A)=(1,X,\top)\to (E,V,\varphi)$ is an object in $I\ls (E,V,\varphi)$ and thus there is a morphism $\lambda_{(\named {e},f)}\eqdef(\named{e'},g)\colon D(\named e,f)=(1,X,\top)\to (K,L,\psi)$. By the compatibility of the cocone, this gives us a uniquely defined $h\colon E\to K$, $e\mapsto e'$ on edges. Similarly for each vertex $v\in V$, there is a morphism $(!_E,\named v)\colon I(V)=(\empset, 1,!_1)\to (E,V,\varphi)$ and a cocone inclusion $(!_K,\named w)\colon D(!_E,\named v)=(\empset, 1,!_1)\to (K,L,\psi)$ giving us a factorization on vertices $k\colon V\to L$. Since $\psi\circ h(e)=\underline{\Pi} _X(kf)\circ \top=\underline{\Pi} _X(k)\circ \varphi(e)$ for each edge $E$,  $(h,k)\colon (E,V,\varphi)\to (K,L,\psi)$ is a well-defined $\C G_{\underline \Pi_X}$-morphism which necessarily is the unique factorization of the cocone. Therefore, $I$ is dense. 
	
\end{proof}

\begin{corollary}
	The nerve $N\colon \C G_{\underline \Pi_X}\to \widehat{\DD G}_{(X,\Aut(X))}$ is full and faithful.
\end{corollary}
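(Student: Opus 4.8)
The plan is to invoke the general equivalence recorded earlier, in the discussion of nerve--realization adjunctions: an interpretation functor $I\colon \DD T\to \C M$ from a small theory into a cocomplete modeling category is dense if and only if its associated nerve $N\colon \C M\to \widehat{\DD T}$ is full and faithful (\cite{sM}, Section~X.6, p.~245). Given this, the corollary is an immediate consequence of the preceding proposition, and the only real work is to confirm that the hypotheses of that criterion are met.

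Concretely, first I would check applicability. The theory $\DD G_{(X,\Aut(X))}$ is a small category, having two objects $V$ and $A$ with homsets $X$ and $\Aut(X)$; and the modeling category $\C G_{\underline \Pi_X}=\Set\ls \underline \Pi_X$ is cocomplete, since (as recalled above) the forgetful functor $U\colon \C G_{\underline \Pi_X}\to \Set\x\Set$ creates colimits. Hence $I$ is a genuine interpretation functor in the sense of the nerve--realization setup, and $N$ is precisely its nerve, so the Mac Lane criterion applies verbatim.

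Having established applicability, I would simply apply the forward implication of the equivalence: the preceding proposition shows $I$ is dense, and therefore $N$ is full and faithful. I do not anticipate any genuine obstacle here — the entire mathematical content has already been discharged in proving density of $I$, and this statement merely transports that density into the language of the nerve. (For completeness one could instead verify fullness and faithfulness of $N$ directly by unwinding the formula $N(H)(A)=\C G_{\underline \Pi_X}(I(A),H)$, but this would only reprove, in less conceptual terms, what density already gives.)
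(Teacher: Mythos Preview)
Your proposal is correct and matches the paper's approach exactly: the corollary is stated without a separate proof, as an immediate consequence of the preceding density proposition via the equivalence (recalled earlier from \cite{sM}) that an interpretation is dense iff its nerve is full and faithful. Your verification that $\DD G_{(X,\Aut(X))}$ is small and $\C G_{\underline \Pi_X}$ is cocomplete is the routine check implicit in the paper's setup.
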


Note that the realization functor takes a $\widehat{\DD G}_{(X,\Aut(X))}$-object and quotients out the set of arcs by $\Aut(X)$. Hence the unit of the adjunction $\eta_P\colon P\to NR(P)$ is bijective on vertices and surjective on arcs. Hence the adjunction is epi-reflective.

For a $\C G_{\underline \Pi_X}$-object $(B,C,\varphi)$, the embedding given by the nerve functor is given by 
\begin{align*}
N(B,C,\varphi)(V)&=\C G_{\underline \Pi_X}(I(V), (B,C,\varphi))\iso C,\\
N(B,C,\varphi)(A)&=\C G_{\underline \Pi_X}(I(A), (B,C,\varphi))\\ &=\setm{(e,g)\ }{\ e\in B,\ g\colon X\to C\ s.t.\ \underline{\Pi} _Xg=\varphi(e)} 
\end{align*}
The  right-actions are by precomposition, i.e., $(e,g).x=(e,g\circ \named x)$, $(e,g).\sigma=(e,g\circ \sigma)$.

Next, we show that injective and projectives in the category of $\C G_{\underline \Pi_X}$-graphs are precisely those objects which are taken to injective and projective objects in the category of $(X,\Aut(X))$-graphs.

\begin{proposition}\label{P:InjectiveN}
	A $\underline \Pi_X$-graph $Q$ is injective if and only if $N(Q)$ is an injective $(X,\Aut(X))$-graph.
\end{proposition}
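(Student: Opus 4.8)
The plan is to read off both implications from the formal properties of the adjunction $R\dashv N\colon \C G_{\underline \Pi_X}\to \widehat{\DD G}_{(X,\Aut(X))}$ recorded above, without first producing an explicit description of the injective $\underline \Pi_X$-graphs. The two directions are handled by the two functors: full faithfulness of $N$ gives the implication that $N(Q)$ injective implies $Q$ injective, while a monomorphism-preservation property of the realization $R$ gives the converse.

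For the first implication, suppose $N(Q)$ is injective. Given a monomorphism $m\colon A\to B$ and a morphism $f\colon A\to Q$ in $\C G_{\underline \Pi_X}$, I would apply $N$. Since $N$ is a right adjoint it preserves monomorphisms, so $N(m)$ is a monomorphism, and injectivity of $N(Q)$ produces $k\colon N(B)\to N(Q)$ with $k\circ N(m)=N(f)$. As $N$ is full (Corollary above) there is $\ell\colon B\to Q$ with $N(\ell)=k$, and as $N$ is faithful the equality $N(\ell\circ m)=N(\ell)\circ N(m)=N(f)$ forces $\ell\circ m=f$. Hence $Q$ is injective; this half uses only that $N$ is fully faithful and a right adjoint.

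The converse rests on the standard fact that a right adjoint preserves injective objects as soon as its left adjoint preserves monomorphisms, so the crux is to check that $R$ preserves monomorphisms. Recall that $R$ is $m_V$ on vertices and the induced map $[m_A]\colon G(A)/\sim\ \to\ H(A)/\sim$ on $\Aut(X)$-orbits. To see $[m_A]$ is injective, suppose $m_A(\gamma_1)\sim m_A(\gamma_2)$, say $m_A(\gamma_1).\sigma=m_A(\gamma_2)$ with $\sigma\in\Aut(X)$; equivariance of the presheaf morphism $m$ gives $m_A(\gamma_1).\sigma=m_A(\gamma_1.\sigma)$, and injectivity of $m_A$ then yields $\gamma_1.\sigma=\gamma_2$, i.e.\ $\gamma_1\sim\gamma_2$. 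Since monomorphisms in $\C G_{\underline \Pi_X}=\Set\ls\underline \Pi_X$ are exactly the morphisms injective on both vertices and edges, $R(m)$ is a monomorphism.

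With $R$ mono-preserving, the remaining direction is formal: for $Q$ injective, a monomorphism $m\colon A\to B$ in $\widehat{\DD G}_{(X,\Aut(X))}$ and a morphism $f\colon A\to N(Q)$, transpose $f$ to $\hat f\colon R(A)\to Q$ with $f=N(\hat f)\circ\eta_A$ (for $\eta$ the unit of $R\dashv N$); extend $\hat f$ along the monomorphism $R(m)$ using injectivity of $Q$ to get $\hat k\colon R(B)\to Q$ with $\hat k\circ R(m)=\hat f$; then $k\defeq N(\hat k)\circ\eta_B$ satisfies $k\circ m=f$ by naturality of $\eta$. The one genuinely non-formal point, and the step I expect to be the main obstacle, is the equivariance argument showing $R$ preserves monomorphisms (together with the identification of monomorphisms in $\C G_{\underline \Pi_X}$ as the componentwise-injective morphisms); the remainder is adjunction bookkeeping. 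As a consistency check, unwinding Proposition \ref{P:Injective} through the explicit formula for $N(Q)(A)$ shows that $N(Q)$ is injective precisely when $Q$ is non-initial and $\varphi_Q\colon Q(E)\to\underline \Pi_X(Q(V))$ is surjective, which is the expected concrete description of the injective $\underline \Pi_X$-graphs.
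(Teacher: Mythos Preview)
Your proof is correct and follows essentially the same strategy as the paper's: both directions are derived from the adjunction $R\dashv N$, using full faithfulness of $N$ (plus preservation of monomorphisms) for one implication and the fact that $R$ preserves monomorphisms for the other. The paper simply asserts ``the realization preserves monomorphisms'' and then runs the adjunction argument through the counit isomorphism $\varepsilon_Q\colon RN(Q)\to Q$, whereas you supply the equivariance check on $\Aut(X)$-orbits explicitly and phrase the lifting via the transpose; these are the same argument in slightly different packaging. One minor remark: for your conclusion that $R(m)$ is a monomorphism you only need the direction ``componentwise injective $\Rightarrow$ monomorphism'' in $\Set\ls\underline\Pi_X$, which follows immediately from faithfulness of the forgetful functor to $\Set\times\Set$; the full ``exactly'' claim is not required.
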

\begin{proof}
	If $N(Q)$ is injective, then $Q$ is injective since $N$ is full and faithful and preserves monomorphisms. Conversely, let $Q$ be an injective $\underline \Pi_X$-graph and consider the monomorphism $f\colon G\to H$ and morphism $g\colon G\to N(Q)$ of $(X,\Aut(X))$-graphs. The realization preserves monomorphisms, hence $R(f)\colon R(G)\to R(H)$ is a monomorphism. Since the counit $\varepsilon_Q \colon RN(Q)\to Q$ is an isomorphism, $RN(Q)$ is injective and thus there is a morphism $h\colon R(H)\to RN(Q)$ such that $h\circ R(f)=R(g)$. Therefore, the following diagram commutes
	\[
	\xymatrix@C=.25em{ G \ar@/_5em/[dddr]_{g} \ar@{>->}[rr]^-f \ar[d]^{\eta_G} && H \ar@/^5em/[dddl]^{\overline h} \ar[d]_{\eta_H} \\ NR(G) \ar@{>->}[rr]^-{NR(f)} \ar[dr]_{NR(g)} && NR(H) \ar[dl]^{N(h)} \\ & NRN(Q) \ar[d]_{N(\varepsilon_Q)}^\iso & \\ & N(Q) &}
	\]
	where $\overline h\defeq \varepsilon_Q\circ N(h)\circ \eta_H$. Thus, $\overline h\circ f=g$ and hence $N(Q)$ is injective.
	
\end{proof}

\begin{proposition}\label{P:ProjectiveN}
	A $\underline \Pi_X$-graph $P$ is projective if and only if $N(P)$ is a projective $(X,\Aut(X))$-graph. 
\end{proposition}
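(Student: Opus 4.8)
The plan is to prove both directions via the adjunction $R\dashv N$ and the projective refinement $\Proj$ of Section~\ref{S:InjProj}, dualizing the proof of Proposition~\ref{P:InjectiveN}. I would first record two facts about $N$. Since epimorphisms in $\C G_{\underline \Pi_X}$ are the morphisms surjective on vertices and on edges, and since every element of a symmetric power $\underline \Pi_X(C)$ is represented by a genuine map $X\to C$, a short diagram chase shows $N$ preserves epimorphisms: an edge-surjection lifts each edge, and vertex-surjectivity together with the freedom to precompose with a permutation in $\Aut(X)$ lifts the accompanying incidence map. Second, $I$ is full and faithful, since $\C G_{\underline \Pi_X}(I(V),I(A))\iso X$ and $\C G_{\underline \Pi_X}(I(A),I(A))\iso \Aut(X)$ recover the homsets of $\DD G_{(X,\Aut(X))}$; hence $NI\iso y$, that is $N(I(V))\iso \underline V$ and $N(I(A))\iso \underline A$.

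For the direction that $N(P)$ projective implies $P$ projective, I would argue purely formally: because $N$ preserves epimorphisms, its left adjoint $R$ preserves projectives, so $R(N(P))$ is projective; since the counit $\varepsilon_P\colon RN(P)\to P$ is an isomorphism, $P$ is projective.

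For the converse, suppose $P$ is projective. Applying $R$ to the projective refinement $\varepsilon_{N(P)}\colon \Proj(N(P))\to N(P)$ and postcomposing with the isomorphism $\varepsilon_P$ yields an epimorphism $\pi\colon R(\Proj(N(P)))\to P$, where, since $R$ preserves colimits and $Ry=I$, the domain is the coproduct $Q_0\defeq \bigsqcup_{N(P)(V)} I(V)\sqcup \bigsqcup_{N(P)(A)} I(A)$. Projectivity of $P$ splits $\pi$, so $P$ is a retract of $Q_0$; applying $N$ then exhibits $N(P)$ as a retract of $N(Q_0)$. Since retracts of projectives are projective, it suffices to prove $N(Q_0)$ is projective.

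The main obstacle is the identification of $N(Q_0)$, which is where the one genuine computation lies: a priori $N$, being a right adjoint, need not preserve the coproduct $Q_0$. I would resolve this by observing that $I(V)$ and $I(A)$ are connected, so that $\C G_{\underline \Pi_X}(I(V),-)$ and $\C G_{\underline \Pi_X}(I(A),-)$ preserve coproducts; concretely, the unique edge of $I(A)$ has all of $X$ as its incidence, forcing any morphism out of $I(A)$ into a coproduct to factor through a single summand. As these hom-functors are exactly the evaluations $N(-)(V)$ and $N(-)(A)$, and colimits in $\widehat{\DD G}_{(X,\Aut(X))}$ are computed pointwise, $N$ preserves $Q_0$. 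Together with $N(I(V))\iso \underline V$ and $N(I(A))\iso \underline A$ this gives $N(Q_0)\iso \bigsqcup_{N(P)(V)} \underline V\sqcup \bigsqcup_{N(P)(A)} \underline A$, a coproduct of representables and hence projective by the characterization of projective $(X,\Aut(X))$-graphs as coproducts of representables. Therefore $N(P)$ is projective, completing the proof.
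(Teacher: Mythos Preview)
Your proof is correct and follows the same overall architecture as the paper's: both directions go through the adjunction $R\dashv N$, and the converse builds the epimorphism $R(\Proj(N(P)))\to RN(P)\iso P$ to exhibit $P$ as a retract of the coproduct $Q_0=\bigsqcup I(V)\sqcup\bigsqcup I(A)$. There are, however, two genuine differences worth noting. For the forward direction, the paper argues directly from $N$ being full and faithful (and epi-preserving), lifting along $N$ and then reflecting the lift back; you instead use the formal fact that a left adjoint whose right adjoint preserves epimorphisms sends projectives to projectives, combined with $RN(P)\iso P$. For the converse, the paper inserts an extra structural step: it asserts that any split subobject of a coproduct of copies of $I(V)$ and $I(A)$ is again such a coproduct, so that $P$ itself is of this form, and only then applies $N$. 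You bypass this classification entirely by applying $N$ to the retraction and showing $N(Q_0)$ is projective, supplying the connectedness argument for why $N$ preserves the coproduct $Q_0$; the paper states that $N$ preserves coproducts without justification. Your route is a bit more self-contained and avoids the split-subobject claim; the paper's route yields the bonus information that $P$ is itself a coproduct of $I(V)$'s and $I(A)$'s.
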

\begin{proof}
	If $N(P)$ is projective, then $P$ is projective since $N$ is full and faithful and preserves epimorphisms. Conversely, let $P$ be a projective $\underline \Pi_X$-graph. It is clear that $I(V)$ and $I(A)$ are projective objects in $\C G_{\underline \Pi_X}$, thus $R(\Proj(N(P)))\iso \bigsqcup_{N(P)(V)}I(V)\sqcup \bigsqcup_{N(P)(A)}I(A)$ is projective. Since the projective refinement \\$\Proj(N(P)))\to N(P)$ (see Section \ref{S:InjProj}) is an epimorphism and $\varepsilon_P\colon RN(P)\to P$ is an isomorphism, the composition $R(\Proj(N(P)))\to RN(P)\to P$ is an epimorphism. Thus, $P$ is a split subobject of a coproduct of $I(V)$ and $I(A)$. However, the only split subobjects of such a coproduct is itself a coproduct of $I(V)$ and $I(A)$. Then since $N$ preserves coproducts and $NI(V)=\underline V$ and $NI(A)=\underline A$, $N(P)$ is projective. 
	
\end{proof}

\end{document}